\theoremstyle{definition} %±êÌâÓë±àºÅΪºÚÌå, ÕýÎÄΪÕý³£×ÖÌå
\newtheorem{Unity}{Unity}[section] %\newtheorem{¶¨Àí»·¾³Ãû}{±êÌâ}[Ö÷¼ÆÊýÆ÷Ãû]
\theoremstyle{plain} %±êÌâÓë±àºÅΪºÚÌå, ÕýÎÄΪбÌå
\newtheorem{Theorem}[Unity]{Theorem}
\newtheorem*{Theorem*}{Theorem}
\newtheorem{Proposition}[Unity]{Proposition}
\newtheorem{Corollary}[Unity]{Corollary}
\newtheorem{Lemma}[Unity]{Lemma}
\theoremstyle{remark} %±êÌâÓë±àºÅΪбÌå, ÕýÎÄΪÕý³£×ÖÌå
\newcommand{\K}{\overline{K}}
\newcommand{\N}{\mathbb{N}}
\newcommand{\PP}{\mathbb{P}}
\newcommand{\g}{\mathfrak{g}}
\newcommand{\p}{\mathfrak{p}}
\newcommand{\T}{\mathrm{T}}
\newcommand{\Gal}{\mathrm{Gal}}
\newcommand{\E}{{\mathscr E}}
\newcommand{\Ls}{\mathscr{L}}
\newcommand{\Ox}{\mathscr{O}}
\newcommand{\Omg}{\mathrm{\Omega}}
\newcommand{\Char}{\mathrm{char}}
\newcommand{\Spec}{\mathrm{Spec}}
\newcommand{\Pic}{\mathrm{Pic}}
\newcommand{\rk}{\mathrm{rk}}
\newcommand{\Sym}{\mathrm{Sym}}
\newcommand{\codim}{\mathrm{codim}}
\begin{document}

\title{Semistability of Rational Principal $GL_n$-Bundles in Positive Characteristic}
\author{Lingguang Li}
\address{School of Mathematical Sciences, Tongji University, Shanghai, P. R. China}
\email{LiLg@tongji.edu.cn}
\begin{abstract}
Let $k$ be an algebraically closed field of characteristic $p>0$, $X$ a smooth projective variety over $k$ with a fixed ample divisor $H$. Let $E$ be a rational $GL_n(k)$-bundle on $X$, and $\rho:GL_n(k)\rightarrow GL_m(k)$ a rational $GL_n(k)$-representation at most degree $d$ such that $\rho$ maps the radical $R(GL_n(k))$ of $GL_n(k)$ into the radical $R(GL_m(k))$ of $GL_m(k)$. We show that if $F_X^{N*}(E)$ is semistable for some integer $N\geq\max\limits_{0<r<m}C^r_m\cdot\log_p(dr)$, then the induced rational $GL_m(k)$-bundle $E(GL_m(k))$ is semistable. As an application, if $\dim X=n$, we get a sufficient condition for the semistability of Frobenius direct image ${F_X}_*(\rho_*(\Omega^1_X))$, where $\rho_*(\Omega^1_X)$ is the locally free sheaf obtained from $\Omega^1_X$ via the rational representation $\rho$.
\end{abstract}
\maketitle

\section{Introduction}
Let $k$ be an algebraically closed field of arbitrary characteristic, $X$ a smooth projective variety over $k$ with a fixed ample divisor $H$. Let $G$ and $G'$ be reductive algebraic groups over $k$, $\rho:G\rightarrow G'$ a homomorphism of algebraic groups. One of the important and essential problem in the studying of $G$-bundles is to study the behavior of the semistability of $G'$-bundles under the extension of structure group. In precise, let $E$ be a semistable rational $G$-bundle on $X$, does the induced rational $G'$-bundle $E(G')$ is also semistable?

Suppose that $\rho$ maps the radical of $R(G)$ into the radical $R(G')$ of $G'$ (Unless stated otherwise, we always require this condition for any homomorphisms of algebraic groups and all representations are rational representations in this paper), and $E$ is a semistable rational $G$-bundle on $X$. If char($k$)$=0$, S. Ramanan and A. Ramanathan \cite[Theorem 3.18]{RamananRamanathan84} showed that the induced rational $G'$-bundle $E(G')$ is also semistable on $X$. If char($k$)$=p>0$, the induced rational $G'$-bundle $E(G')$ may be not semistable in general. However, S. Ramanan and A. Ramanathan \cite[Theorem 3.23]{RamananRamanathan84} proved that strong semistability of rational $G$-bundle $E$ implies the strong semistability of rational $G'$-bundle $E(G')$. In addition, S. Ilangovan, V. B. Mehta and A. J. Parameswaran \cite{IlangovanMehtaPara03} showed that if $G'=GL_m(k)$ for some integer $m>0$ and $p>\mathrm{ht}(\rho)$, then the induced rational $GL_m(k)$-bundle $E(GL_m(k))$ is semistable. F. Coiai and Y. I. Holla \cite{CoiaiHolla06} generalized some results of \cite{RamananRamanathan84} and showed that given a representation $\rho:G\rightarrow GL_m(k)$, there exists a non-negative integer $N$, depending only on $G$ and $\rho$, such that for any rational $G$-bundle $E$ whose $N$-th Frobenius pull back $F^{N*}_X(E)$ is semistable, then the induced rational $GL_m(k)$-bundle $E(GL_m(k))$ is again semistable. S. Gurjar and V. Mehta \cite{GurjarMehta11} improved the result of \cite{CoiaiHolla06} and obtain a explicit bound for $N$ in terms of certain numerical data attached to $\rho$. The main ingredient of the proof in \cite{CoiaiHolla06} and \cite{GurjarMehta11} is to give a uniform bound for the field of definition of the instability parabolics (Kempf' parabolic) associated to non-semistable points in related representing space.

We now briefly describe the main idea of their proof. Fix a representation $G\rightarrow GL_m(k)$. Let $E$ be rational $G$-bundle on $X$, $E(G)$ the group scheme over $X$ associated to $E$, and $E(GL_m(k))$ the induced rational $GL_m(k)$-bundle under the extension of structure group via $\rho$.
Let $k(X)$ be the function field of $X$, the generic fiber $E(G)_0$ of $E(G)$ is a group scheme over $\Spec(k(X))$. Let $P$ be a maximal parabolic subgroup of $GL_m(k)$, $E(GL_m(k)/P)$ the associated $GL_m(k)/P$-fiber space over $X$, and $E(GL_m(k)/P)_0$ the generic fiber of $E(GL_m(k)/P)$. Then there is an $E(G)_0$-action on the smooth projective variety $E(GL_m(k)/P)_0$ over $k(X)$ which is linearized by a suitable very ample line bundle. If $E(GL_m(k))$ adimts a reduction of structure group to this maximal parabolic subgroup $P$, then we get a rational section $\sigma:U\rightarrow E(GL_m(k)/P)$, where $U$ is an open subscheme of $X$ with $\codim_X(X-U)\geq 2$. Restricting to the generic fiber gives a $k(X)$-rational point $\sigma_0$ of $E(GL_m(k)/P)_0$. In \cite{RamananRamanathan84}, it is shown that if $\sigma_0$ is a semistable point in $E(GL_m(k)/P)_0$ for the above $E(G)_0$-action, then the rational reduction $\sigma$ does not violate the semistability of rational $GL_m(k)$-bundle $E(GL_m(k))$. Also, if $\sigma_0$ is not semistable for the above $E(G)_0$-action and its instability parabolic $P(\sigma_0)$, which is defined over $\overline{k(X)}$, is actually defined over $k(X)$, then again $\sigma$ does not contradict the semistability of rational $GL_m(k)$-bundle $E(GL_m(k))$. In the case of characteristic $0$, by the uniqueness of instability parabolic and it is invariant under the action of Galois group, then $P(\sigma_0)$ is actually defined over $k(X)$. This proves that $E(GL_m(k))$ is a semistable rational $GL_m(k)$-bundle. However, in the case of characteristic $p>0$, $P(\sigma_0)$ may be not defined over $k(X)$, and it is defined over a finite extension of $k(X)$. By the uniqueness of instability parabolic, the Galois descent argument implies that $P(\sigma_0)$ is actually defined over finite purely inseparable field extension $K^{p{-N}}$ of $K$ for some non-negative integer $N$.

In \cite{CoiaiHolla06} and \cite{GurjarMehta11}, the authors showed that there exists a uniform bound $N$, depending only on $G$ and $\rho$, such that for all possible rational reductions to all maximal parabolic subgroups the instability parabolics of points corresponding to these rational reductions are actually defined over $K^{p{-N}}$ via different methods. This can be shown to imply that if $E$ is a semistable rational $G$-bundle such that $F^{N*}_X(E)$ is semistable, then the induced rational $GL_m(k)$-bundle $E(GL_m(k))$ is also semistable. The major differences between the methods of \cite{CoiaiHolla06} and that of \cite{GurjarMehta11} lie in the approach of estimating the field extension $L$ of $K$ such that a given $K$-scheme $M$ has a $L$-rational point. F. Coiai and Y. I. Holla \cite{CoiaiHolla06} proved the existence of the uniform bound by bounding the non-separability of the group action and the non-reducedness of the stabilizers of various unstable rational points. However, the above estimation does not seen quantifiable. On the other hand, S. Gurjar and V. Mehta \cite{GurjarMehta11} directly estimated the field of definition of the instability parabolics which is probably weaker than the method of \cite{CoiaiHolla06}, but it is quantifiable.

The paper is organized as follows.

In section $2$, we recall some definitions and results about geometric invariant theory and rational principal bundles, such as the instability $1$-PS, instability parabolics of non-semistable points, etc. These results can be found in \cite{Kempf78} and \cite{RamananRamanathan84}.

In section $3$, we mainly study the rationality of the instability parabolics of non-semistable points in $GL_n(k)$-representation spaces, and apply these results to the study of semistability of rational principal bundles under the extension of structure groups via a $GL_n(k)$-representation $\rho:GL_n(k)\rightarrow GL_m(k)$.

\begin{Theorem}[Theorem \ref{Thm:StbExtGroup}]
Let $k$ be an algebraically closed field of characteristic $p>0$, $X$ a smooth projective variety over $k$ with a fixed ample divisor $H$, $\rho:GL_n(k)\rightarrow GL_m(k)$ a $GL_n(k)$-representation over $k$ at most degree $d$. Let $E$ be a rational $GL_n(k)$-bundle on $X$ such that $F_X^{N*}(E)$ is semistable for some integer $$N\geq\max_{0<r<m}C^r_m\cdot\log_p(dr).$$
Then the induced rational $GL_m(k)$-bundle $E(GL_m(k))$ is also semistable.
\end{Theorem}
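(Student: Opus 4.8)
The plan is to run the Ramanan--Ramanathan argument by contradiction, feeding in a uniform bound on the field of definition of Kempf's instability parabolics as the quantitative input. Assume $E(\GL_m(k))$ is not semistable. Since a $\GL_m(k)$-bundle fails to be semistable precisely when it admits a reduction to some maximal parabolic violating the semistability inequality, there are an integer $0<r<m$ and a rational reduction $\sigma\colon U\to E(\GL_m(k)/P_r)$ of $E(\GL_m(k))$ to the maximal parabolic $P_r\subseteq\GL_m(k)$ stabilising an $r$-dimensional subspace, defined on an open set $U\subseteq X$ with $\codim_X(X\setminus U)\geq 2$, which violates semistability. Passing to the generic point of $X$, with function field $K=k(X)$, one obtains a $K$-rational point $\sigma_0$ of the generic fibre $E(\GL_m(k)/P_r)_0$, carrying the action of the twisted group scheme $E(\GL_n(k))_0$ (through $\rho$ and the natural action on $\GL_m(k)/P_r$) linearised by an ample line bundle. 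By the Ramanan--Ramanathan analysis recalled in Section~2, since $\sigma$ violates semistability the point $\sigma_0$ cannot be semistable, and by uniqueness of the Kempf parabolic together with Galois descent its instability parabolic $P(\sigma_0)\subseteq E(\GL_n(k))_{0,\K}$ is defined over a finite purely inseparable extension $K^{p^{-s_r}}$ of $K$.

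The crux — and the step I expect to be hardest — is to bound $s_r$ by data depending only on $m$, $r$, $d$. Composing $\sigma_0$ with the relative Pl\"ucker embedding $\GL_m(k)/P_r\hookrightarrow\PP(\wedge^r k^m)$ exhibits $\sigma_0$ as an unstable $K$-rational point of $\PP(\wedge^r k^m)$ for the $\GL_n(k)$-action through $\wedge^r\rho$; the matrix entries of $\wedge^r\rho$ are the $r\times r$ minors of $\rho$, hence regular functions of degree at most $dr$, so $\wedge^r\rho$ is a $\GL_n(k)$-representation of degree at most $dr$ on a space of dimension $C^r_m$. I would then apply the rationality results of Section~3 on instability parabolics in $\GL_n(k)$-representation spaces: $P(\sigma_0)$ is controlled by the canonical weight filtration of $\wedge^r k^m$ attached to the optimal $1$-PS, which has length at most $C^r_m$; each of its successive jumps is defined over a purely inseparable extension of $K$ of exponent at most $\log_p(dr)$ (the bound coming from the degree of $\wedge^r\rho$), so altogether $s_r\leq C^r_m\cdot\log_p(dr)$.

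Put $N_0=\max_{0<r<m}C^r_m\cdot\log_p(dr)$, so that $s_r\leq N_0\leq N$ for every relevant $r$. The last step is a Frobenius descent. Pulling $\sigma$ back along the $N$-th iterate $F_X^N$ of the Frobenius of $X$ yields a rational reduction $F_X^{N*}(\sigma)$ of $F_X^{N*}(E)(\GL_m(k))=F_X^{N*}\bigl(E(\GL_m(k))\bigr)$ to $P_r$; it still violates semistability, because $F_X^{N*}$ multiplies the relevant degree of a reduction by $p^N>0$. At the same time, forming the $N$-th Frobenius pull-back raises the field of definition of the instability parabolic to its $p^N$-th power, so the Kempf parabolic of the generic point of $F_X^{N*}(\sigma)$ is now defined over $(K^{p^{-s_r}})^{p^N}=K^{p^{N-s_r}}\subseteq K$, i.e. over $K$. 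By the Ramanan--Ramanathan criterion — applied now over $K$ to the \emph{semistable} rational $\GL_n(k)$-bundle $F_X^{N*}(E)$ — a reduction of $F_X^{N*}(E)(\GL_m(k))$ to a maximal parabolic whose generic point is unstable with $K$-rational instability parabolic cannot violate semistability: here the hypothesis $\rho(R(\GL_n(k)))\subseteq R(\GL_m(k))$ is exactly what is used to transfer a violation of semistability of $F_X^{N*}(E)(\GL_m(k))$ to a destabilising parabolic reduction of $F_X^{N*}(E)$ itself. This contradicts the semistability of $F_X^{N*}(E)$, and therefore $E(\GL_m(k))$ is semistable.
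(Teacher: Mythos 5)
Your proposal is correct and follows essentially the same route as the paper: the Pl\"ucker embedding of $\GL_m(k)/P_r$ into $\PP(\bigwedge^r k^m)$, the observation that $\bigwedge^r\rho$ has dimension $C^r_m$ and degree at most $dr$, the Section~3 rationality bound forcing the instability parabolic of $\sigma_0$ to be defined over $k(X)^{p^{-s_r}}$ with $s_r\leq C^r_m\log_p(dr)$, the Frobenius pull-back making that parabolic $k(X)$-rational, and the Ramanan--Ramanathan criterion (Lemma~\ref{InstbParabolic}) applied to the semistable bundle $F_X^{N*}(E)$ --- the only structural difference being that you argue by contradiction where the paper directly verifies $\deg_H T_\sigma\geq 0$ for every rational reduction. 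One minor caveat: your parenthetical heuristic for the bound $s_r\leq C^r_m\cdot\log_p(dr)$ (a weight filtration of length $C^r_m$ with jumps of exponent $\log_p(dr)$) is not how Theorem~\ref{Thm:PolyRep} is actually proved --- that result instead bounds the degree of a field extension over which a suitable conjugating element of $\GL_n$ becomes rational --- but since you invoke the Section~3 result as a black box, this does not affect the correctness of your argument for the present theorem.
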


In section $4$, we study the semistability of truncated symmetric powers $\T^l(\E)$ which is obtained from a torsion free sheaf $\E$ under a special group representations. The main result is to give a sufficient condition for the semistability of $\T^l(\E)$.

\begin{Theorem}[Theorem \ref{Thm:T(V)}]
Let $k$ be an algebraically closed field of characteristic $p>0$, $X$ a smooth projective variety over $k$ with a fixed ample divisor $H$, integer $0\leq l\leq(p-1)\cdot\dim X$. Let $\E$ be a torsion free sheaf of rank $n$ on $X$ such that $F^{N*}_X(\E)$ is semistable for some integer
$$N\geq\max_{1\leq r\leq N(p,n,l)}C^r_{N(p,n,l)}\cdot\log_p(lr),$$
where $N(p,n,l)=\sum\limits^{l(p)}\limits_{q=0}(-1)^q\cdot C^q_n\cdot C^{l-pq}_{n+l-q-1}$, and $l(p)$ is the unique integer such that $0\leq l-l(p)\cdot p< p$. Then the torsion free sheaf $\T^l(\E)$ is also a semistable sheaf.
\end{Theorem}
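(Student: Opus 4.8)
The plan is to deduce Theorem~\ref{Thm:T(V)} from Theorem~\ref{Thm:StbExtGroup} by exhibiting the truncated symmetric power $\T^l$ as a polynomial $GL_n(k)$-representation whose degree and dimension are controlled by $l$ and $N(p,n,l)$. First I would set up the representation. For an $n$-dimensional $k$-vector space $V$, let $I(V)\subseteq\Sym(V)$ be the homogeneous ideal generated by $\{v^p:v\in V\}$; since $\Char k=p$ one has $\left(\sum a_ie_i\right)^{\!p}=\sum a_i^pe_i^p$, so $I(V)$ is already generated by $e_1^p,\dots,e_n^p$ for a basis $e_1,\dots,e_n$ of $V$, and $I(V)$ is $GL_n(k)$-stable because $g\cdot v^p=(gv)^p$. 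Thus $\Sym(V)/I(V)\cong k[e_1,\dots,e_n]/(e_1^p,\dots,e_n^p)$ is a graded $GL_n(k)$-algebra, its degree-$l$ component $\T^l(V)$ is a quotient of $\Sym^l(V)$, and the resulting representation $\rho_l\colon GL_n(k)\to GL(\T^l(V))$ is polynomial of degree at most $l$. Counting monomials $e_1^{a_1}\cdots e_n^{a_n}$ with $0\le a_i\le p-1$ and $a_1+\dots+a_n=l$ shows $\dim_k\T^l(V)$ equals the coefficient of $t^l$ in $\left((1-t^p)(1-t)^{-1}\right)^{\!n}$, which, after expanding $(1-t^p)^n$ and $(1-t)^{-n}$, is exactly $N(p,n,l)$. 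Fixing an isomorphism $\T^l(V)\cong k^m$ with $m=N(p,n,l)$ then gives $\rho_l\colon GL_n(k)\to GL_m(k)$ of degree $\le l$.

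Second, I would check the hypotheses of Theorem~\ref{Thm:StbExtGroup} for $\rho_l$. The radical $R(GL_n(k))$ is the central subgroup $\Gm$ of homotheties, and a homothety $\lambda$ acts on the degree-$l$ piece $\T^l(V)$ as the scalar $\lambda^l$, so $\rho_l$ carries $R(GL_n(k))$ into the centre $\Gm=R(GL_m(k))$. With $d=l$, the bound required there is $N\ge\max_{0<r<m}C^r_m\log_p(dr)$; since the index set $\{1,\dots,m-1\}$ is contained in $\{1,\dots,N(p,n,l)\}$ with the same non-negative summand, the hypothesis $N\ge\max_{1\le r\le N(p,n,l)}C^r_{N(p,n,l)}\log_p(lr)$ of the present theorem is at least as strong, and Theorem~\ref{Thm:StbExtGroup} is applicable with this $N$.

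Third, I would reduce the torsion-free statement to one about rational principal bundles. Let $U\subseteq X$ be the open locus on which $\E$ is locally free; as $X$ is smooth, $\codim_X(X\setminus U)\ge2$, and the frame bundle of the vector bundle $\E|_U$ is a rational $GL_n(k)$-bundle $E$ on $X$ whose associated rank-$n$ vector bundle agrees with $\E$ over $U$. Since $F_X$ is a homeomorphism (so $F_X^N(U)=U$), the restriction $(F^{N*}_X\E)|_U$ is the $F_X^N$-pullback of the vector bundle $\E|_U$, hence locally free, and it is the vector bundle underlying $F^{N*}_X(E)$; because slopes, saturated subsheaves and semistability are unaffected by removing a closed subset of codimension $\ge2$, the hypothesis that the torsion-free sheaf $F^{N*}_X(\E)$ is semistable is equivalent to $F^{N*}_X(E)$ being a semistable rational $GL_n(k)$-bundle. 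Theorem~\ref{Thm:StbExtGroup}, applied to $E$ and $\rho_l$, then yields that $E(GL_m(k))$ is a semistable rational $GL_m(k)$-bundle. By construction of $\rho_l$, the rank-$m$ vector bundle associated with $E(GL_m(k))$ restricts on $U$ to the truncated symmetric power $\T^l(\E|_U)=\T^l(\E)|_U$; since for $GL_m(k)$ semistability of a rational bundle is equivalent to semistability of its associated vector bundle (cf.\ \cite{RamananRamanathan84}), $\T^l(\E)|_U$ is semistable, whence the torsion-free sheaf $\T^l(\E)$ on $X$ is semistable by the codimension-$\ge2$ principle again. The degenerate cases ($\T^l(\E)=0$, or of rank $\le1$, or $l=0$) are trivially semistable.

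I expect the only non-formal ingredients to be the generating-function identity $\dim_k\T^l(k^n)=N(p,n,l)$ and the routine check that forming $I(\E)$ and taking the degree-$l$ graded piece commute with restriction to $U$, so that $\T^l(\E)$ really is determined by $\T^l(\E|_U)$. The genuinely substantive part — the uniform bound on the field of definition of the instability (Kempf) parabolics attached to rational reductions of structure group — is already contained in Theorem~\ref{Thm:StbExtGroup}, so no further geometric invariant theory is needed here; the content of this section is essentially the bookkeeping that turns that theorem into a statement about $\T^l(\E)$.
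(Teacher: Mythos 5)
Your proposal is correct and follows essentially the same route as the paper: the paper's proof is precisely to invoke Proposition~\ref{Prop:l} (that $\T^l$ is a polynomial $GL_n(k)$-representation of degree at most $l$ and dimension $N(p,n,l)$) and then apply Theorem~\ref{Thm:StbExtGroup} with $d=l$ and $m=N(p,n,l)$. Your realization of $\T^l(V)$ as the degree-$l$ piece of $\Sym(V)/(e_1^p,\dots,e_n^p)$ is an equivalent description of the paper's subspace $(V^{\otimes l})^{S_l}\subset V^{\otimes l}$, and the extra bookkeeping you supply (the radical condition, the generating-function count, the sheaf/rational-bundle dictionary) only makes explicit what the paper leaves implicit.
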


In section $5$, we study the semistability of the Frobenius direct image of $\rho_*(\Omg^1_X)$ which is obtained from $\Omg^1_X$ via the a $GL_n(k)$-representation $\rho$.

\begin{Theorem}[Theorem \ref{Thm:FroRep}]
Let $k$ be an algebraically closed field of characteristic $p>0$, $X$ a smooth projective variety over $k$ of dimension $n$ with a fixed ample divisor $H$ such that $\deg_H(\Omg^1_X)\geq 0$, $\rho:GL_n(k)\rightarrow GL_m(k)$ a $GL_n(k)$-representation at most degree $d$. Suppose that $F^{N*}_X(\Omg^1_X)$ is semistable for some integer
$$N\geq\max_{1\leq l\leq n(p-1)\atop 0<r<m\cdot N(p,n,l)}C^r_{m\cdot N(p,n,l)}\log_p((d+l)r).$$
Then the Frobenius direct image ${F_X}_*(\rho_*(\Omg^1_X))$ is a semistable sheaf, where $\rho_*(\Omg^1_X)$ is the locally free sheaf obtained from $\Omg^1_X$ via the representation $\rho$. Moreover, in this case, ${F_X}_*(\rho_*(\Omg^1_X)\otimes\Ls)$ is semistable for any line bundle $\Ls\in\Pic(X)$.
\end{Theorem}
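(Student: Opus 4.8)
The plan is to reduce the statement to Theorem \ref{Thm:StbExtGroup} applied not to $\Omg^1_X$ directly but to the Frobenius pushforward of a truncated symmetric power, and to control the representation-theoretic data so that the numerical hypothesis on $N$ suffices. First I would recall the key structural fact underlying the whole circle of ideas: by the work on Frobenius direct images of bundles associated to $\Omg^1_X$ (the canonical filtration of $F_X^*(F_{X*}(\Ls))$ and its dual description), the locally free sheaf ${F_X}_*(\rho_*(\Omg^1_X)\otimes\Ls)$ is, up to the twist by $\Ls$ that can be absorbed at the end since tensoring by a line bundle does not affect semistability of torsion-free sheaves of fixed rank, built out of the graded pieces $\T^l(\rho_*(\Omg^1_X))$ for $0\le l\le n(p-1)$; here $\T^l$ is the truncated symmetric power studied in Section~4. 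Concretely I would show that semistability of each ${F_X}_*$-graded piece follows once each $\T^l(\rho_*(\Omg^1_X))$ is semistable, using that an iterated extension of semistable sheaves of the same slope is semistable and that $F_{X*}$ preserves this slope-constancy under the hypothesis $\deg_H(\Omg^1_X)\ge 0$ (this is exactly where that positivity assumption enters — it guarantees the graded pieces have non-negative slopes so the filtration argument closes).

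Next I would analyze $\T^l(\rho_*(\Omg^1_X))$. The sheaf $\rho_*(\Omg^1_X)$ of rank $m$ is obtained from $\Omg^1_X$ via $\rho:GL_n(k)\to GL_m(k)$ of degree at most $d$; then $\T^l$ of a rank-$m$ bundle is itself associated to $\Omg^1_X$ via the composite representation $GL_n(k)\xrightarrow{\rho}GL_m(k)\to GL_{N(p,n,l)}(k)$, where the second arrow is the truncated-symmetric-power representation of degree at most $l$ and $N(p,n,l)=\sum_{q=0}^{l(p)}(-1)^q C^q_n C^{l-pq}_{n+l-q-1}$ is its target dimension (the same combinatorial rank appearing in Theorem \ref{Thm:T(V)}). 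The composite has degree at most $d+l$ — more precisely bounded by $dl$, but the statement's $\log_p((d+l)r)$ indicates the relevant bound is additive, which I would verify from how degrees of representations compose through $\T^l$ — and it clearly carries radicals to radicals since both factors do. Thus $\rho_*(\Omg^1_X) $'s truncated power is the bundle $E(GL_{N(p,n,l)}(k))$ induced from $E=$ the frame bundle of $\Omg^1_X$ by a representation of degree $\le d+l$ into $GL_{m\cdot N(p,n,l)}(k)$ (the factor $m$ accounting for first extending to $GL_m$ and then taking $\T^l$), so Theorem \ref{Thm:StbExtGroup} applies provided $$N\ge\max_{0<r<m\cdot N(p,n,l)}C^r_{m\cdot N(p,n,l)}\cdot\log_p((d+l)r),$$ and taking the maximum also over $1\le l\le n(p-1)$ gives exactly the hypothesis. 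Since $F_X^{N*}(\Omg^1_X)$ is semistable by assumption, Theorem \ref{Thm:StbExtGroup} yields semistability of each $\T^l(\rho_*(\Omg^1_X))$.

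Finally I would assemble: with every graded piece of the canonical filtration of ${F_X}_*(\rho_*(\Omg^1_X)\otimes\Ls)$ semistable and of equal slope (the slope of an associated bundle is a rational linear function of $\mu_H(\Omg^1_X)$ and the representation's central character, constant across the filtration), the total sheaf is semistable; the twist by an arbitrary $\Ls\in\Pic(X)$ changes all slopes by the same amount $\deg_H(\Ls)$ and so is harmless, giving the "moreover" clause for free. The main obstacle I anticipate is the second step: correctly identifying $\T^l$ applied to an associated bundle with the bundle associated to the composite representation, and in particular pinning down that the degree of that composite is controlled by $d+l$ (not merely $dl$) and that it still sends $R(GL_n)$ into $R(GL_{m\cdot N(p,n,l)})$ — the radical condition is needed for Theorem \ref{Thm:StbExtGroup} to be quoted at all. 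A secondary technical point is verifying that the canonical-filtration/Frobenius-pushforward machinery genuinely exhibits ${F_X}_*(\rho_*(\Omg^1_X))$ as an iterated extension of the $\T^l$'s rather than something more complicated, and that $\deg_H(\Omg^1_X)\ge 0$ is precisely what makes the slope bookkeeping work; both are where I would spend the most care.
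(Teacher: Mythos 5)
There is a genuine gap: you have misidentified the sheaves whose semistability must be established, and this propagates into the representation-theoretic bookkeeping. The Frobenius direct image machinery the paper relies on (Lemma \ref{Lem:SunStb}, i.e.\ \cite[Theorem 4.8]{Sun08}) requires the semistability of $\E\otimes\T^l(\Omg^1_X)$ for $1\leq l\leq n(p-1)$, that is, the \emph{tensor product} of the sheaf $\E=\rho_*(\Omg^1_X)$ being pushed forward with the truncated symmetric powers of $\Omg^1_X$ itself: the canonical filtration of $F_X^*F_{X*}(\F)$ has associated graded $\F\otimes\T^l(\Omg^1_X)$, not $\T^l(\F)$. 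You instead work with $\T^l(\rho_*(\Omg^1_X))$, the truncated powers of the rank-$m$ associated bundle, i.e.\ the bundle induced by the \emph{composite} representation $\T^l\circ\rho$. The correct input is the bundle associated to $\Omg^1_X$ via the tensor product representation $\rho\otimes\rho^l:GL_n(k)\rightarrow GL_k(W\otimes\T^l(V))$, whose target has dimension $m\cdot N(p,n,l)$ and whose degree is at most $d+l$ by additivity of degrees under tensor products (Lemma \ref{Lem:TensorRep}); this is exactly Proposition \ref{Prop:StbTenTru}, and it is what makes the constants in the hypothesis come out right.

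The mismatch is visible in your own numerics: you note that the composite $\T^l\circ\rho$ should have degree about $dl$ while the theorem's bound involves $d+l$, and you leave this unresolved (``which I would verify\dots''). It cannot be verified, because composition of representations multiplies degrees while tensor product adds them; likewise the target dimension of $\T^l\circ\rho$ is $N(p,m,l)$, not the $m\cdot N(p,n,l)$ appearing in the bound. Once the objects are corrected the proof is short: Proposition \ref{Prop:StbTenTru} (itself an application of Theorem \ref{Thm:StbExtGroup} to $\rho\otimes\rho^l$) gives semistability of $\rho_*(\Omg^1_X)\otimes\T^l(\Omg^1_X)$ for all $1\leq l\leq n(p-1)$ under the stated hypothesis on $N$, and Lemma \ref{Lem:SunStb} then yields semistability of ${F_X}_*(\rho_*(\Omg^1_X))$. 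There is no need to re-derive the filtration argument, and in fact the graded pieces do not all have the same slope, so your ``iterated extension of semistable sheaves of equal slope'' description is not how the cited lemma works; the hypothesis $\deg_H(\Omg^1_X)\geq 0$ enters inside that lemma, not through slope-constancy. Your treatment of the twist is fine in spirit: apply Lemma \ref{Lem:SunStb} to $\E=\rho_*(\Omg^1_X)\otimes\Ls$, noting that tensoring by a line bundle does not affect the semistability of the sheaves $\E\otimes\T^l(\Omg^1_X)$.
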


\section{Geometric Invariant Theory and Rational Principal Bundles}

\subsection{Geometric Invariant Theory}
Let $K$ be a field, $K_s$ the separable closure of $K$, $\K$ the algebraically closure of $K$. Let $G$ be a connected reductive algebraic group over $K$, Then $G$ has a maximal torus $T$ defined over $K$ (See \cite[Proposition 7.10]{BorelSpringer68}), and $T$ splits over $K_s$. Let $X_*(T)$ be the group of parameter subgroups ($1$-PS) of $T$, i.e., group homomorphisms of the multiplicative group $G_m$ into $T$. Let $N_T$ be the normalizer of $T$ in $G$, then Weyl group $W_T:=N_T/T$ of $G$ with respect to $T$ acts on $X_*(T)$ by conjugation. Fix an inner product $\langle,\rangle$ on $X_*(T)$ which is invariant under the action of Weyl group $W_T$ as well as the Galois group $\Gal(K_s/K)$ (See Section $4$ of \cite{Kempf78}). Then we can define \emph{norm} $\|\lambda\|$ of $1$-PS $\lambda\in X_*(T)$ as $\langle\lambda,\lambda\rangle$. Let $T'$ be another maximal torus of $G$, $T$ is conjugate to $\T'$ by an element of $G$, and the isomorphism $T\rightarrow T'$ is well defined up to Weyl group action on $T$. Therefore the inner product $\langle,\rangle$ on $X_*(T)$ determines uniquely one in $X_*(T')$. Hence the norm of any $1$-PS in $G$ is well defined.

Let $V$ be a finite dimensional $\K$-vector space, $\rho:G(\K)\rightarrow GL_{\K}(V)$ a representation of $G$. A vector $0\neq v\in V$ is \emph{semistable} for the $G(\K)$-action if $0\notin\overline{G(\K)\cdot v}$. One knows that this is equivalent to existence of a $G(\K)$-invariant element $\phi\in S^m(V)$ for some $m>0$ such that $\phi(v)\neq0$. For a $1$-PS $\lambda$ of $G(\K)$, $V$ has a decomposition $V=\bigoplus V_i$, where $V_i=\{v\in V|\lambda(t)v=t^i v\}$. Define $$m(\lambda,v):=\min\{i|v~\text{has a non-zero component in}~V_i\},$$ $$\mu(\lambda,v):=\frac{m(\lambda,v)}{\|\lambda\|}.$$

For a $1$-PS $\lambda$ of $G(\K)$, the associated subgroup $P(\lambda)$ of $G(\K)$ is defined by $$P(\lambda):\{g\in G(\K)|\lim\limits_{t\rightarrow 0}\lambda(t)\cdot g\cdot \lambda^{-1}(t)~\text{exists in}~G(\K)\},$$ which is a parabolic subgroup of $G(\K)$.

For a non-semistable vector $v\in V$, define the \emph{instability} $1$-PS of $v$ to be the $1$-PS $\lambda_v$ such that $\mu(\lambda_v,v)=\sup\{\mu(\lambda,v)|\lambda\in X_*(G(\K))\}$, which is not unique.

We now recall some basic facts in geometric invariant theory, which can be found in \cite{Kempf78} and \cite{RamananRamanathan84}.

\begin{Lemma}\cite{Kempf78}\cite{RamananRamanathan84}\label{Lem:InstbParabolic}
Let $K$ be a field, $G$ a connected reductive algebraic group over $K$, $\rho:G(\K)\rightarrow GL_{\K}(V)$ a representation of $G(\K)$ on a $\K$-vector space $V$ of finite dimension. Let $v\in V$ be a non-semistable point for the $G(\K)$-action. Then
\begin{itemize}
    \item[$(1)$] The function $\lambda\mapsto\mu(\lambda,v)$ on the set $X_*(G(\K))$ attains the maximum value.
    \item[$(2)$] There is a unique instability parabolic $P(v)$ of $v$ such that for any instability $1$-PS $\lambda$ of $v$, we have $P(v)=P(\lambda)$.
    \item[$(3)$] For any maximal torus $T\subseteq P(v)$, there is a unique instability $1$-PS $\lambda_T$ of $v$ such that $\lambda_T\subseteq T$.
    \item[$(4)$] For $g\in G(\K)$, $\lambda$ is the instability $1$-PS of $v$, then $g\cdot\lambda(t)\cdot g^{-1}$ is the instability $1$-PS of $g\cdot v$, $\mu(\lambda,v)=\mu(g\lambda(t)g^{-1},g\cdot v)$ and $P(g\cdot v)=g\cdot P(v)\cdot g^{-1}$.
    \item[$(5)$] For an instability $1$-PS $\lambda$ of $v$, if $\lambda$ is defined over an extension field $L/K$, then the instability parabolic $P(v)$ of $v$ is also defined over $L/K$.
\end{itemize}
\end{Lemma}

\subsection{Rational Principal $G$-Bundles}
Let $K$ be a field, $G$ a reductive algebraic group over $K$, $X$ a smooth projective variety over $K$ with a fixed ample divisor $H$. A (\emph{principal}) \emph{$G$-bundle} on $X$ is a $X$-scheme $\pi:E\rightarrow X$ with an $G$-action (acts on the right) and $\pi$ is $G$-invariant and isotrivial, i.e., locally trivial in the \'{e}tale topology.

If $Y$ is a quasi projective $G$-scheme over $K$ (on the left), the associated fibre bundle $E(Y)$ over $X$ is the quotient $E\times_KF$ under the action of $G$ given by $$g(e,y)=(e\cdot g,g^{-1}\cdot y), g\in G, e\in E, y\in Y.$$

Let $M$ be a projective variety over $K$ with a $G$-action, which is linearized by an ample line bundle $\Ls$ on $X$. Let $E(G):=E\times_{G,\text{Int}}G$ denote the group scheme over $X$ associated to $E$ by the action of $G$ on itself by inner automorphisms. Then $X$-group scheme $E(G)$ acts naturally on the $X$-scheme $E(M)$ which is linearized by line bundle $E(\Ls)$.

Let $x\in X$ be a point of $X$, $E(G)_x$, $E(M)_x$ and $E(\Ls)_x$ denote the fiber of $E(G)$, $E(M)$ and $E(\Ls)$ over $x$ respectively.
Then $E(G)_x$ is a group scheme over $\Spec(k(x))$, and one has the action of $E(G)_x$ on $E(M)_x$ linearized by line bundle $E(\Ls)_x$ which is defined over $\Spec(k(x))$.

Let $P\subset G$ be a closed subgroup of $G$, a \emph{reduction of structure group} of $E$ to $P$ is a pair $\sigma:=(E_{\sigma},\phi)$ with a $P$-bundle $E_{\sigma}$ and an isomorphism of $G$-bundles $\phi:E_{\sigma}(G)\rightarrow E$. Note that quotient $E/P$ is naturally isomorphic to the fiber bundle $E(G/P)$ on $X$ and a section $\sigma:X\rightarrow E/P$ gives the $P$-bundle $\sigma^*(E)$ with natural isomorphism $\sigma^*(E)(G)\cong E$. This induces a bijection correspondence between sections of $E/P\rightarrow X$ and reductions of structure group of $E$ to $P$. Let $T_P$ be the tangent bundle along the fibers of the map $E/P\rightarrow X$, then $T_{\sigma}:=\sigma^*(T_P)$ is the vector bundle on $X$ associated to $P$-bundle $E_{\sigma}$ for the natural representation of $P$ on $\g/\p$, where $\g$ and $\p$ is the Lie algebra of $G$ and $P$ respectively.

A \emph{rational $G$-bundle} $E$ on $X$ is a $G$-bundle on a big open subscheme $U$ of $X$, i.e. $\codim_X(X-U)\geq 2$. A \emph{rational reduction of structure group} of a rational $G$-bundle $E$ to a subgroup $P\subset G$ is a reduction $\sigma$ of structure group of $E|_{U'}$ to $P$ over a big open subscheme $U'\subseteq U$. Then the locally free sheaf $T_{\sigma}$ on $U'$ determine a reflexive sheaf ${i_{U'}}_*(T_{\sigma})$ where $i_{U'}:U'\rightarrow X$ is the natural open immersion, denoted by $T_{\sigma}$ again. The rational $G$-bundle $E$ is \emph{semistable} if for any rational reduction $\sigma$ of $E$ to any parabolic subgroup $P$ of $G$ over any big open subscheme $U'\subseteq U$, the rational vector bundle $T_{\sigma}$ has $\deg_H(T_{\sigma})\geq 0$. If $G=GL_n(K)$, then there is an one to one correspondence between rational $GL_n(K)$-bundles between reflexive torsion free sheaves. In this case, the semistability of the rational $GL_n(K)$-bundle $E$ is equivalent to the semistability of the torsion free sheaf $\E$ in the sense of Mumford-Takemoto, where $\E$ is the reflexive torsion free sheaf corresponds to $E$.

\subsection{Frobenius Pull Back of Principal $G$-Bundles}
Let $K$ be a field of characteristic $p>0$, $\phi:X\rightarrow\Spec(K)$ a scheme over $k$. The \emph{absolute Frobenius morphism} $F_X:X\rightarrow X$ is induced by $\Ox_X\rightarrow\Ox_X$, $f\mapsto f^p$. Consider the commutative diagram
$$\xymatrix{
  X \ar@/_/[ddr]_{\phi} \ar@/^/[drr]^{F_X} \ar@{>}[dr]|-{F_g}\\
   & X^{(1)} \ar[d]^{F_k^*(\phi)} \ar[r]^{F_a} & X \ar[d]^{\phi}\\
   & \Spec(K) \ar[r]^{F_K}             & \Spec(K).}$$
The morphism $F_a$ (resp. $F_g$) is called the \emph{arithmetic Frobenius morphism} (resp. \emph{geometric Frobenius morphism}) of $\phi:X\rightarrow\Spec(K)$.

If $K$ is a perfect field, $F_K$ and $F_a$ are isomorphisms. Let $G$ be an algebraic group over $K$, $\pi:E\rightarrow X$ a $G$-bundle over $X$. Pulling back by the absolute Frobenius $F^*_X$ we get a $G$-bundle $F^*_X(\pi):F^*_X(E)\rightarrow X_F$, where $X_F$ is the scheme $X$ endowed with the $K$-structure by the composition $X\stackrel{\phi}{\rightarrow}\Spec(K)\stackrel{F_K}{\rightarrow}\Spec(K)$. If $K$ is a perfect field, we can change the $K$-structures of $G$, $X_F$ and $F^*_X(E)$ by composing their structure morphisms with $\Spec(K)\stackrel{F^{-1}_K}{\rightarrow}\Spec(K)$ to get a $F^*_K(G)$-bundle $F^*_X(E)\rightarrow X$. In this case, the $F^*_K(G)$-bundle $F^*_X(E)\rightarrow X$ is the same as the bundle obtained from $G$-bundle $E$ by the extension of structure group $g:G\rightarrow F^*_K(G)$.

Let $k$ be an algebraically closed field of characteristic $p>0$, $X$ a smooth projective variety over $k$ with a fixed ample line bundle $H$, $G$ a reductive algebraic group over $k$. Let $E$ be a rational $G$-bundle on $X$, then we can get a rational $F^*_K(G)$-bundle $F^*_X(E)$ on $X$. Then $E$ is semistable when $F^*_X(E)$ is semistable. If $F^{m*}_X(E)$ is semistable for any integer $m\geq 0$, then $E$ is called \emph{strongly semistable}.

\subsection{Semistability of Rational $G$-bundles under Extension of Structure Groups}
Let $k$ be an algebraically closed field, $X$ a smooth projective variety over $k$ with a fixed ample line bundle $H$, $G$ a reductive algebraic group over $k$ with a rational representation $\rho:G\rightarrow GL_m(k)$. Let $E$ be a semistable rational $G$-bundle on $X$, we study the semistability of rational $GL_m(k)$-bundle $E(GL_m(k))$.

Let $P$ be a maximal parabolic subgroup of $GL_m(k)$, then $G$ acts on $GL_m(k)/P$ which is linearized by the very ample generator $\Ls$ of $\Pic(GL_m(k)/P)$. This gives an $G$-invariant embedding of $GL_m(k)/P$ inside projective space $\PP(H^0(X,\Ls))$. Then $X$-group scheme $E(G)$ acts naturally on the $X$-scheme $E(GL_m(k)/P)$ which is linearized by line bundle $E(\Ls)$.

Let $E(G)_0$, $E(GL_m(k)/P)_0$ and $E(\Ls)_0$ be the fiber of $E(G)$, $E(GL_m(k)/P)$ and $E(\Ls)$ over the generic point of $X$ respectively. Then $E(G)_0$ is a group scheme over function field $\Spec(k(X))$, and one has the action of $E(G)_0$ on $E(GL_m(k)/P)_0$ linearized by line bundle $E(\Ls)_0$ which is defined over $\Spec(k(X))$. Therefore there is an one to one correspondence between rational reductions of structure group of rational $GL_m(k)$-bundle $E(GL_m(k))$ to $P$ and $k(X)$-rational points of $E(GL_m(k)/P)_0$.

\begin{Lemma}\cite[Proposition 3.10, Proposition 3.13]{RamananRamanathan84}\label{InstbParabolic}
Let $k$ be an algebraically closed field, $X$ a smooth projective variety over $k$ with an ample line bundle $H$. Let $G$ be a reductive algebraic group over $k$, $E$ a semistable rational $G$-bundle on $X$. Let $\sigma:U'\rightarrow E(GL_m(k)/P)$ be a rational reduction of structure group of $GL_m(k)$-bundle $E(GL_m(k))$ to a maximal parabolic subgroup $P$ of $GL_m(k)$, where $U'$ is a big open scheme of $X$. The associated $k(X)$-rational point in $E(GL_m(k)/P)_0$ is denoted by $\sigma_0$. Then
\begin{itemize}
    \item[$(1)$] If $\sigma_0$ is a semistable point for the action of $E(G)_0$ on $E(GL_m(k)/P)_0$ linearized by line bundle $E(\Ls)_0$, then $\deg_HT_{\sigma}\geq 0$.
    \item[$(2)$] If $\sigma_0$ is not a semistable point for the action of $E(G)_0$ on $E(GL_m(k)/P)_0$ linearized by line bundle $E(\Ls)_0$, and its instability parabolic $P(\sigma)$ is defined over $k(X)$, then $\deg_HT_{\sigma}\geq 0$.
\end{itemize}
\end{Lemma}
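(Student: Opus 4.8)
The plan is to reduce both parts to the single inequality $\deg_H(\sigma^{*}E(\Ls))\geq 0$ and then to treat the semistable case by spreading out an invariant section, and the remaining case by spreading out the instability parabolic and invoking the semistability of $E$ itself. For the reduction: since $P$ is a \emph{maximal} parabolic subgroup of $GL_m(k)$ one has $\Pic(GL_m(k)/P)=\Z\cdot\Ls$, and the bundle $T_P$ tangent along the fibres of $GL_m(k)/P\to\Spec(k)$ — the $GL_m(k)$-equivariant bundle attached to the $P$-module $\mathfrak{gl}_m/\p$ — has $\det T_P$ a positive power of $\Ls$ up to a twist by a character of $GL_m(k)$ (it is the relative anticanonical bundle), the twist being absorbed into the normalization of the linearization, where the standing hypothesis $\rho(R(G))\subseteq R(GL_m(k))$ is used. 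Pulling back along $\sigma$ and taking reflexive hulls on $X$ identifies $\det T_{\sigma}=\sigma^{*}(\det T_P)$ with such a power of $\sigma^{*}E(\Ls)$, so $\deg_HT_{\sigma}\geq 0$ follows once $\deg_H(\sigma^{*}E(\Ls))\geq 0$; I suppress the twist below.

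For part $(1)$: by definition $\sigma_0$ is a semistable point iff it stays semistable after base change to $\overline{k(X)}$, so there exist $r>0$ and a $G$-invariant $\phi\in W_r:=H^0(GL_m(k)/P,\Ls^{\otimes r})$ (for the $G$-action through $\rho$, after trivializing over $\overline{k(X)}$) with $\phi(\sigma_0)\neq 0$. As $\sigma_0$ is $k(X)$-rational and the invariant subspace $W_r^{G}$ is defined over $k$, the elements of $W_r^{G}\otimes\overline{k(X)}$ vanishing at $\sigma_0$ form a proper subspace, which cannot contain the $k$-spanning set $W_r^{G}$; hence one may take $\phi\in W_r^{G}$ over $k$. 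The key point is that $G$ acts trivially on $W_r^{G}$, so $E(W_r^{G})=W_r^{G}\otimes_k\Ox_X$ is the trivial sub-bundle of $\pi_{*}(E(\Ls)^{\otimes r})=E(W_r)$ for $\pi:E(GL_m(k)/P)\to X$; thus $\phi$ yields a genuine global $E(G)$-invariant section $\widetilde{\phi}\in H^0(E(GL_m(k)/P),E(\Ls)^{\otimes r})$. Restricting $\widetilde{\phi}$ along $\sigma$ over the big open set on which $\sigma$ is defined gives a section of $(\sigma^{*}E(\Ls))^{\otimes r}$ that is nonzero (its generic value is $\phi(\sigma_0)\neq 0$), hence extends, by normality of $X$, to a nonzero global section; so $(\sigma^{*}E(\Ls))^{\otimes r}$ is effective and $\deg_H(\sigma^{*}E(\Ls))\geq 0$.

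For part $(2)$ the extra hypothesis is that $Q_0:=P(\sigma_0)$ descends to $k(X)$. Since $E(G)_0$ is an inner $k(X)$-form of $G$, this $k(X)$-rational parabolic spreads out, over a big open $V\subseteq X$, to a rational reduction $\tau$ of $E$ to a parabolic $Q\subseteq G$; semistability of $E$ then gives $\deg_HT_{\tau}\geq 0$ and, more generally, $\deg_HE_{\tau}(\chi)\leq 0$ for every dominant character $\chi$ of $Q$. Taking the instability $1$-PS $\lambda$ of $\sigma_0$ inside a maximal torus of $Q_0$ (so $P(\lambda)=Q_0$, $\mu(\lambda,\sigma_0)>0$, and $\lambda$ is valued in $Q$ via $\tau$) and the rational reduction $\sigma'$ of $E(GL_m(k))$ corresponding to the $\lambda$-fixed limit $\sigma'_0=\lim_{t\to 0}\lambda(t)\cdot\sigma_0$, a standard degeneration along $\lambda$ (forming the $\PP^1$-family $t\mapsto\lambda(t)\cdot\sigma$) should express $\deg_H(\sigma^{*}E(\Ls))$ as $\deg_H((\sigma')^{*}E(\Ls))$ minus $m(\lambda,\sigma_0)\cdot\deg_HE_{\tau}(\chi_{\lambda})$ for a dominant character $\chi_{\lambda}$ of $Q$; the last term is $\geq 0$ by the previous line, while $\deg_H((\sigma')^{*}E(\Ls))\geq 0$ follows by applying part $(1)$ to $\sigma'$ together with the fact that the Levi reduction of $\tau$ is again a semistable principal bundle, so summing yields $\deg_H(\sigma^{*}E(\Ls))\geq 0$. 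I expect this degeneration identity to be the main obstacle: it must be carried out with enough control of fields of definition — $Q_0$ and the points $\sigma_0$, $\sigma'_0$ live over $k(X)$, but $\lambda$ may a priori be defined only over a finite purely inseparable extension of $k(X)$ — and all weight and character normalizations must be pinned down so that the right-hand side is genuinely a sum of nonnegative terms; the auxiliary facts (semistability of the Levi reduction of a parabolic reduction of a semistable bundle, and applicability of the part-$(1)$ argument to a general reductive Levi) also need checking.
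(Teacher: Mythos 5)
The paper offers no proof of this lemma: it is quoted from Ramanan--Ramanathan (Propositions 3.10 and 3.13), so your proposal can only be measured against that source, whose route you do broadly follow. The first genuine gap is in your reduction of $\deg_HT_{\sigma}\geq 0$ to $\deg_H(\sigma^{*}E(\Ls))\geq 0$: the twist you ``suppress'' is not suppressible. For $GL_m(k)/P=Grass(r,k^m)$ one has, $GL_m(k)$-equivariantly, $\det T_P=\Ls^{\otimes m}\otimes(\det k^m)^{\otimes r}$; writing $\F\subset\E$ for the rank-$r$ subsheaf determined by $\sigma$, this gives $\deg_HT_{\sigma}=-m\deg_H\F+r\deg_H\E$, whereas $\deg_H\sigma^{*}E(\Ls)=-\deg_H\F$. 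These have the same sign only when $\deg_H\E=0$, so the implication ``$\deg_H(\sigma^{*}E(\Ls))\geq 0\Rightarrow\deg_HT_{\sigma}\geq 0$'' is false in general. Moreover, with the linearization taken literally to be $\Ls$ the center of $GL_m(k)$ acts nontrivially on the fibre, so no point would be GIT-semistable at all; one must either linearize by $\det T_P$ itself (on which the radical acts trivially, so that your invariant-section argument then yields $\deg_H\det T_{\sigma}\geq 0$ directly, which is the desired conclusion) or restrict the function $\mu$ to $1$-PS of the derived group. This normalization is precisely where the standing hypothesis $\rho(R(G))\subseteq R(GL_m(k))$ does its work, and it has to be carried out rather than waved at; otherwise part (1), which is otherwise sound (the spreading-out of an invariant section is exactly the argument of Proposition 3.10), proves the wrong inequality.

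In part (2) the central identity --- expressing $\deg_H(\sigma^{*}E(\Ls))$ via the limit point $\sigma_0'=\lim_{t\to 0}\lambda(t)\cdot\sigma_0$ and a character of $Q$ --- is only conjectured (``should express\dots''), and this is the heart of Proposition 3.13, not a routine verification. Worse, the auxiliary fact you lean on, that the Levi reduction of an \emph{arbitrary} parabolic reduction $\tau$ of a semistable $G$-bundle is again semistable, is false in general (only the Harder--Narasimhan reduction enjoys this). Fortunately it is also unnecessary: what is actually used is Kempf's theorem that $\sigma_0'$ is a semistable point for the action of the centralizer $Z(\lambda)$ with respect to the linearization shifted by the weight $m(\lambda,\sigma_0)$, and the invariant-section argument of part (1) nowhere uses semistability of the underlying bundle, so it applies verbatim to the $Z(\lambda)$-bundle; semistability of $E$ enters only through the inequality $\deg_HE_{\tau}(\chi)\leq 0$ for dominant characters $\chi$ of $Q$. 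Finally, your worry that $\lambda$ might live only over a purely inseparable extension evaporates once $P(\sigma_0)$ is $k(X)$-rational: choose a maximal torus $T\subseteq P(\sigma_0)$ defined over $k(X)$; the unique instability $1$-PS in $T$ given by Lemma \ref{Lem:InstbParabolic}(3) is Galois-stable in $X_*(T)$, hence defined over $k(X)$.
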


\section{The Semistability of Principal Bundles via $GL_n$-Representaions}

Let $K$ be an arbitrary field, $\rho:GL_n(K)\rightarrow GL_m(K)$ a representation of $GL_n(K)$ over $K$. Then $\rho$ is given by an $m\times m$-matrix of regular functions $$\frac{f_{ij}}{\det(T_{ij})^{a_{ij}}}\in K[T_{ij},\det(T_{ij})^{-1}]_{1\leq i,j\leq n},$$ where $a_{ij}\in\N$ and $f_{ij}\in K[T_{ij}]_{1\leq i,j\leq n}$ with $(\det(T_{ij}),f_{ij})=1$. Denote $$d:=\max\limits_{1\leq i,j\leq n}\{\deg(f_{ij}+n(a-a_{ij})\},~\text{where}~a:=\max\limits_{1\leq i,j\leq n}\{a_{ij}\}.$$
We say $\rho$ is a \emph{$GL_n(K)$-representation over $K$ of dimension $m$ at most degree $d$.}
Moreover, if $a_{ij}=0$ for any $1\leq i,j\leq n$, i.e., the above regular functions on $GL_n(K)$ lie in the subring $K[T_{ij}]_{1\leq i,j\leq n}$, we say that $\rho$ is a \emph{polynomial representation} of $GL_n(K)$ over $K$.

In this section, we use a variant of method of S. Gurjar and V. Mehta \cite{GurjarMehta11} to give an explicit uniform bound for the field of definition of all instability parabolics of all non-semistable points in a given $GL_n(k)$-representation space $V$ in terms of $n$, $\dim_kV$ and the maximal degree of regular functions correspond to the given $GL_n(k)$-representation. Moreover, we use these results to the study of semistability of principal bundles under the extension of structure groups via $GL_n(k)$-representations.

\begin{Theorem}\label{Thm:PolyRep}
Let $K$ be an arbitrary field of characteristic $p>0$, $V$ a $K$-vector space of dimension $m$. Let $\rho:GL_n(K)\rightarrow GL_K(V)$ be a $GL_n(K)$-representation over $K$ at most degree $d$. Then the instability parabolic of any unstable $K$-rational point in $V$ is defined over $K^{p^{-N}}$ for some positive integer $N\geq m\cdot\log_p(d)$.
\end{Theorem}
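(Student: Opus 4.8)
Let $v \in V$ be an unstable $K$-rational point. By Lemma \ref{Lem:InstbParabolic}, the instability parabolic $P(v) \subseteq GL_n(\K)$ is well-defined and, by part $(5)$ of that lemma, it suffices to exhibit an instability $1$-PS of $v$ that is defined over $K^{p^{-N}}$ for some $N \geq m\cdot\log_p(d)$. The key point is that the instability $1$-PS $\lambda_v$ is defined over $K_s$ (indeed over $K$ up to conjugation inside a maximal torus, using uniqueness and Galois invariance of the inner product), but the issue is that the \emph{optimal integral} $1$-PS achieving the supremum of $\mu(\lambda,v)$ may require passing to a $p$-power root of the lattice $X_*(T)$. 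So I would set up a maximal torus $T$ of $GL_n(K)$ defined over $K$, write the weight decomposition of $V$ with respect to $T$, and track how the instability $1$-PS $\lambda_T \in X_*(T) \otimes \Q$ (the rational optimal direction) fails to be integral.

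\textbf{Step 1: Reduce to a torus computation.} Diagonalize: pick $T \subseteq GL_n(K)$ the standard maximal torus, so $X_*(T) \cong \Z^n$. The representation $\rho$ decomposes $V = \bigoplus_{\chi} V_\chi$ over weights $\chi \in X^*(T)$. The hypothesis "at most degree $d$" controls the weights: each character $\chi$ appearing in $\rho$ is of the form $(c_1,\dots,c_n)$ with $\sum |c_i|$ bounded in terms of $d$ — more precisely, the denominators $\det^{a_{ij}}$ and numerators of degree $\le d - n(a-a_{ij})$ force every weight $\chi = (c_1, \ldots, c_n)$ appearing to satisfy $|c_i| \le d$ for all $i$ (the degree bound $d$ was defined precisely to absorb the contribution of the determinant twists). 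This is the crucial numerical input: all weights of $\rho$ lie in a box of size $d$ in each coordinate.

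\textbf{Step 2: Analyze the optimal $1$-PS via Kempf's description.} The instability $1$-PS $\lambda_T$ lying in $T$ is, by Kempf's theory, the (suitably normalized) direction maximizing $\mu(\lambda, v) = m(\lambda,v)/\|\lambda\|$; the optimal \emph{rational} direction is uniquely determined by $v$ and is fixed by $\Gal(K_s/K)$ since the inner product is Galois-invariant and $\lambda_T$ is unique (Lemma \ref{Lem:InstbParabolic}$(3)$), hence $\lambda_T$ as a ray in $X_*(T)\otimes\R$ is defined over $K$. The function $\lambda \mapsto m(\lambda, v)$ is piecewise linear, determined by which weights $\chi$ of $V$ pair non-negatively with $\lambda$ against the support of $v$; the optimal direction is the orthogonal projection of the origin onto a convex hull of a subset of the weights occurring in $v$. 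Since those weights have integer coordinates bounded by $d$ in absolute value, the vertices of the relevant convex polytope are lattice points in $[-d,d]^n$, and so the optimal direction $\lambda_T$ is a \emph{rational} vector whose coordinates have a common denominator $D$ bounded by a determinant of an $\le n \times n$ matrix with entries in $[-d,d]$ — giving $D \le (\text{something like}) \, d^n \cdot n^{n/2}$ by Hadamard, or more crudely $D \mid $ a product controlled by $n$ and $d$.

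\textbf{Step 3: Extract the $p$-power and conclude.} Write $\lambda_T = \frac{1}{D}\mu_0$ with $\mu_0 \in X_*(T) = \Z^n$ primitive-ish. Factor $D = p^e \cdot D'$ with $\gcd(D', p) = 1$. The $1$-PS $\mu_0$ itself is a genuine integral $1$-PS of $T$ over $K$, and it defines the same parabolic $P(\lambda_T) = P(\mu_0)$ since parabolics depend only on the ray; but to match the setup of Lemma \ref{InstbParabolic}(2) I actually want the instability $1$-PS in the sense of the GIT over $k(X)$, which is where the inseparability genuinely enters: the issue is not $\mu_0$ but that the geometric optimal $1$-PS of the \emph{twisted} problem over $k(X)$ may be a Frobenius-twist of $\mu_0$. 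Here I would invoke that any $1$-PS defined over $\overline{K}$ that is $\Gal(K_s/K)$-invariant is automatically defined over $K^{p^{-N}}$ once the "amount of inseparability" — measured by the $p$-adic valuation $e = v_p(D)$ of the denominator of the optimal rational direction — is bounded; and $p^e \le D \le$ (bound from Step 2), so $e \le \log_p(D) \le m \cdot \log_p(d)$ after absorbing the $n$-dependence into $m = \dim_k V$ (since $n \le m$ always, as $\rho$ is faithful-ish, or more honestly since the relevant convex-hull computation takes place among the $\le m$ weights of $V$, giving a denominator bounded by $d^{m}$). Then Lemma \ref{Lem:InstbParabolic}$(5)$ upgrades the $1$-PS being defined over $K^{p^{-N}}$ to $P(v)$ being defined over $K^{p^{-N}}$, which is the claim.

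\textbf{Main obstacle.} The genuinely delicate point is Step 3: carefully relating the denominator of the optimal \emph{rational} direction (a clean convex-geometry quantity) to the minimal purely inseparable extension over which an actual \emph{integral} instability $1$-PS is defined — the subtlety being that scaling a rational $1$-PS to clear denominators is harmless for the parabolic but Kempf's optimality normalization and the behavior under Frobenius twist of the ambient GIT problem over $k(X)$ must be tracked to make sure the exponent $e$ is exactly $v_p$ of that denominator and nothing larger. Getting the bound to come out as $m \cdot \log_p(d)$ rather than something with an extra factor of $n$ or $n/2$ will require the sharpest available estimate on the convex hull denominators (Cramer's rule applied to an $(n{-}1)$-dimensional face, with entries in $[-d,d]$), and I expect this is where the paper does its real work.
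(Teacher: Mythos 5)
Your proposal rests on a misidentification of where the inseparability comes from, and this is a genuine gap rather than a fixable technicality. In Kempf's theory the instability $1$-PS is already an \emph{integral} cocharacter $\lambda_v\in X_*(GL_n(\K))$; the optimal rational direction in $X_*(T)\otimes\Q$ and its denominator $D$ play no role in the field of definition of $P(v)$. What can fail to be defined over $K$ (or over $K_s$) is not the cocharacter as an element of the lattice of a split torus, but its \emph{embedding} into the group: the maximal tori of $GL_{n,\K}$ containing $\lambda_v$ need not be defined over $K_s$, and the element $g\in GL_n(\K)$ conjugating $\lambda_v$ into a fixed $K$-split torus $T$ is a priori only a geometric point. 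Your Step 3 asserts that the inseparable degree is controlled by $v_p(D)$, but no mechanism is given (nor exists): $D$ could be prime to $p$ while $P(v)$ still requires a large purely inseparable extension, because the obstruction lives in the conjugating element, not in the weight lattice. You acknowledge this yourself in the ``main obstacle'' paragraph --- but that obstacle is the entire theorem.

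The paper's actual argument is quite different. It fixes the $K$-split torus $T$ and a simultaneous eigenbasis $e_1,\ldots,e_m$, writes $g\cdot v=\sum_l \frac{f_l(g)}{\det(g)^a}e_l$ with $\deg f_l\le d$, conjugates an instability $1$-PS $\lambda$ into $T$ by some $g\in GL_n(\K)$, and then replaces $g$ by a point $g'$ of the affine $K$-scheme cut out by exactly those $f_l$ vanishing at $g$, with the complementary $f_l$ and $\det$ nonvanishing at $g'$. By \cite[Lemma 10]{GurjarMehta11} such a $g'$ exists over an extension $L/K$ with $[L:K]\le\prod\deg(f_l)\le d^m$. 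Since $g\cdot v$ and $g'\cdot v$ then have the same support in the eigenbasis, $g\lambda g^{-1}$ is still an instability $1$-PS for $g'\cdot v$, so $g'^{-1}(g\lambda g^{-1})g'$ is an instability $1$-PS of $v$ defined over a controlled extension; Lemma \ref{Lem:InstbParabolic}(5) and Galois descent then bound the purely inseparable degree by $d^m\le p^N$. The numerical input is a Bezout-type bound on the degrees of the coordinate functions $f_l$, not a convex-hull denominator estimate; to salvage your approach you would have to explain how to produce an actual instability $1$-PS over a controlled purely inseparable extension, which is exactly the step your outline leaves open.
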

\begin{proof}
By base change, the representation $\rho:GL_n(K)\rightarrow GL_K(V)$ over $K$ induces a representation $\overline{\rho}:GL_n(\K)\rightarrow GL_{\K}(V_{\K})$ over $\K$ at most degree $d$. Fix a maximal torus $T$ in $GL_n(\K)$ which is defined over $K$. Denote $V_{\K}:=V\otimes_K\K$, we can choose a simultaneous eigen basis $e_1,\ldots,e_m$ of $V_{\K}$ for all $1$-PS of $GL_n(k)$ which lie in $T$.

Let $v\in V$ be a non-semistable $K$-rational point with respect to the $GL_n(\K)$-action $\overline{\rho}$. Then there are $f_l\in K[T_{ij}]_{1\leq i,j\leq n}$ of $\deg(f_l)\leq d$ and $a\in\N$, such that for any $\K$-rational point $g\in GL_n(\K)$, we have $$g\cdot v=\sum^m_{l=1}\frac{f_l(g)}{\det(g)^a}e_l.$$
Let $\lambda(t)\in X_*(GL_n(\K))$ be an instability $1$-PS of $v$. Then there exists $\K$-rational point $g\in GL_n(\K)$ such that $g\cdot\lambda(t)\cdot g^{-1}\subseteq T$. Then $g\cdot\lambda(t)\cdot g^{-1}$ is defined over $K_s$, the separable closure of $K$, and $g\cdot\lambda(t)\cdot g^{-1}$ is an instability $1$-PS of $g\cdot v$ with $\mu(\lambda(t),v)=\mu(g\cdot\lambda(t)\cdot g^{-1},g\cdot v)$.

Let $f_{l_1},\ldots, f_{l_r}$ (resp. $f_{l_{r+1}},\ldots, f_{l_m}$) denote the set of polynomials which vanish at $g$ (resp. non-vanish at $g$). Consider the $K$-affine scheme $$X:=\Spec(K[T_{ij}]_{1\leq i,j\leq n}/(f_{l_1},\ldots, f_{l_r})).$$
Then $g$ is a $\K$-rational point of $X(\K)\subseteq\mathbb{A}^{n\times n}(\K)$ with $$\det(g)\prod^m_{l=r+1}f_l(g)\neq 0.$$
Therefore, by \cite[Lemma 10]{GurjarMehta11}, there exists a finite extension field $L\subseteq\K$ of $K$ with $$[L:K]\leq\prod^r_{l=1}\deg(f_l)=d^r$$ such that $X$ has a $L$-rational point $g'$ in $X$ and $\det(g')\prod^m_{l=r+1}f_l(g')\neq0$. Thus $g'\in GL_n(L)$ and $g\cdot v$ and $g'\cdot v$ have the same set of monomials with non-zero coefficients when expanded in terms of the basis $e_1,\ldots,e_m$. Since $e_1,\ldots,e_m$ is a simultaneous basis for $g\cdot\lambda(t)\cdot g^{-1}$, so $\mu(g\cdot\lambda(t)\cdot g^{-1},g\cdot v)=\mu(g\cdot\lambda(t)\cdot g^{-1},g'\cdot v)$. Hence
$$\mu(g'\cdot\lambda(t)\cdot g'^{-1},g'\cdot v)=\mu(\lambda(t),v)=\mu(g\cdot\lambda(t)\cdot g^{-1},g\cdot v)=\mu(g\cdot\lambda(t)\cdot g^{-1},g'\cdot v).$$
As $g'\cdot\lambda(t)\cdot g'^{-1}$ is an instability $1$-PS of $g'\cdot v$, so $g\cdot\lambda(t)\cdot g^{-1}$ is also an instability $1$-PS of $g'\cdot v$. It follows that $g'^{-1}(g\cdot\lambda(t)\cdot g^{-1})g'$ is an instability $1$-PS of $v$ which is defined over $L$. Thus, by Lemma \ref{Lem:InstbParabolic}, the instability parabolic $P(v)$ of $v$ is defined over $L\cdot K_s$. By the Galois descent argument, any instability parabolic is defined over a purely inseparable extension of $K$. Suppose that for some positive integer $N$ with $p^N\geq d^m\geq d^r$. Then $P(v)$ must be defined over $L\cap K^{p^{-N}}$. By the arbitrariness of non-semistable point $v\in V$, this theorem is followed.
\end{proof}

\begin{Theorem}\label{Thm:StbExtGroup}
Let $k$ be an algebraically closed field of characteristic $p>0$, $X$ a smooth projective variety over $k$ with a fixed ample divisor $H$, $\rho:GL_n(k)\rightarrow GL_m(k)$ a $GL_n(k)$-representation over $k$ at most degree $d$. Let $E$ be a rational $GL_n(k)$-bundle on $X$ such that $F_X^{N*}(E)$ is semistable for some integer $$N\geq\max_{0<r<m}C^r_m\cdot\log_p(dr).$$
Then the induced $GL_m(k)$-bundle $E(GL_m(k))$ is also semistable.
\end{Theorem}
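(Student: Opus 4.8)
The plan is to deduce Theorem~\ref{Thm:StbExtGroup} from Theorem~\ref{Thm:PolyRep} together with Lemma~\ref{InstbParabolic}, following the strategy outlined in the introduction. First I would recall that to check semistability of the induced rational $GL_m(k)$-bundle $E(GL_m(k))$, it suffices by definition to show that for every maximal parabolic subgroup $P\subset GL_m(k)$ and every rational reduction $\sigma:U'\to E(GL_m(k)/P)$ over a big open $U'\subseteq X$, one has $\deg_H T_\sigma\ge 0$. Restricting to the generic fiber, the reduction $\sigma$ corresponds to a $k(X)$-rational point $\sigma_0$ of $E(GL_n(k))_0$-space $E(GL_m(k)/P)_0$, equipped with the natural $E(GL_n(k))_0$-action linearized by $E(\Ls)_0$. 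By Lemma~\ref{InstbParabolic}(1), if $\sigma_0$ is semistable for this action we are done; so assume $\sigma_0$ is unstable, and by Lemma~\ref{InstbParabolic}(2) it then suffices to show that the instability parabolic $P(\sigma_0)$ is defined over $k(X)$.

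The key point is to control the field of definition of $P(\sigma_0)$ using Theorem~\ref{Thm:PolyRep}. I would first reduce the question on the $GL_m(k)/P$-fiber space to a question about a linear representation: embedding $GL_m(k)/P$ into $\PP(H^0(GL_m(k)/P,\Ls))$ as a $GL_m(k)$-equivariant subvariety, instability with respect to the $E(GL_n(k))_0$-action on the fiber space is governed by instability in the ambient representation $W:=H^0(GL_m(k)/P,\Ls)^\vee$, and the composite $GL_n(k)\xrightarrow{\rho}GL_m(k)\to GL(W)$ is again a rational $GL_n(k)$-representation whose degree can be bounded: since $P$ is the $r$-th maximal parabolic the relevant line bundle is (a power of) $\Lambda^r$ of the standard representation, so $W$ is a summand of $\Lambda^r(k^m)$ up to the chosen projective embedding, giving $\dim W\le C^r_m$ and degree at most $d\cdot r$ along each coordinate. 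The twisted form of this representation over $K:=k(X)$ coming from the bundle $E$ — i.e. the generic fiber $E(G)_0$ acting on $E(W)_0$ — is an inner $K$-form, so Theorem~\ref{Thm:PolyRep} applies over $K$: the instability parabolic of the unstable point $\sigma_0\in\PP(E(W)_0)$ is defined over $K^{p^{-N_r}}$ with $N_r\le C^r_m\cdot\log_p(dr)$, and taking the maximum over $0<r<m$ gives the uniform bound $N$ in the hypothesis.

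It then remains to pass from ``$P(\sigma_0)$ is defined over $K^{p^{-N}}$'' to ``$P(\sigma_0)$ is defined over $K$'' under the assumption that $F_X^{N*}(E)$ is semistable. The mechanism is standard: a rational reduction $\sigma$ of $E$ violating semistability, whose instability parabolic is defined over $K^{p^{-N}}$, pulls back under the $N$-th Frobenius $F_X^N$ to a rational reduction of $F_X^{N*}(E)$ whose associated point has instability parabolic defined over $(K^{p^{-N}})^{p^N}=K$ — Frobenius untwists the purely inseparable extension — and $\deg_H T_\sigma\ge 0$ would then follow from semistability of $F_X^{N*}(E)$ via Lemma~\ref{InstbParabolic}(2) applied to the Frobenius pullback, after dividing the degree by the appropriate power of $p$. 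I would phrase this as: if $\deg_H T_\sigma<0$ for some reduction $\sigma$ of $E(GL_m(k))$, then pulling back yields a reduction of $F_X^{N*}(E)(GL_m(k))=F_X^{N*}(E(GL_m(k)))$ with negative degree and instability parabolic over $k(X)$, contradicting semistability of $F_X^{N*}(E)$ together with Lemma~\ref{InstbParabolic}.

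I expect the main obstacle to be the bookkeeping in the second paragraph: precisely identifying the representation $W$ (and its degree bound $dr$ and dimension bound $C^r_m$) attached to the $r$-th maximal parabolic of $GL_m(k)$, and checking that the generic-fiber action $E(G)_0\curvearrowright E(W)_0$ really is given, in a suitable trivialization over $k(X)$, by regular functions in the $T_{ij}$ of degree $\le dr$ so that Theorem~\ref{Thm:PolyRep} applies verbatim over $K=k(X)$ rather than only over $k$. The compatibility between Frobenius pullback of principal bundles and Frobenius pullback of the associated fiber spaces and line bundles (so that the reduction and its instability parabolic behave correctly under $F_X^N$) also needs to be invoked carefully, though it is routine given the setup in Section~2.3.
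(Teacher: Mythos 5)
Your proposal follows essentially the same route as the paper: reduce to rational reductions to maximal parabolics, identify the relevant representation as the composite into $GL_k(\bigwedge^r k^m)$ of dimension $C^r_m$ and degree at most $dr$, invoke Theorem~\ref{Thm:PolyRep} to bound the field of definition of the instability parabolic by $k(X)^{p^{-N}}$, and untwist by Frobenius pullback so that Lemma~\ref{InstbParabolic} applies to $F_X^{N*}(E)$. The one point you flag as an obstacle --- applying Theorem~\ref{Thm:PolyRep} to the twisted generic-fiber action rather than to split $GL_n(k(X))$ --- is resolved in the paper exactly as you anticipate, by base-changing to the separable closure $k(X)_s$ where the bundle trivializes and then Galois-descending the (unique) instability parabolic to a purely inseparable extension of $k(X)$.
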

\begin{proof}
Let $V$ be a $k$-vector space of dimension $m$ with a fixed basis $e_1,\ldots,e_n$. Then there is a natural isomorphism $GL_n(k)\cong GL_k(V)$ with respect to this basis of $V$. Let $P$ be a maximal parabolic subgroup of $GL_k(V)$, then $GL_k(V)/P$ is isomorphic to the grassmannian $Grass(r,V)$ of $r$-dimensional subspaces of $V$ for some integer $0<r<\dim_kV$. This induces a $GL_n(k)$-equivalent embedding $$ GL_k(V)/P\hookrightarrow\PP(\bigwedge^rV),$$ which can be lift to a $GL_n(k)$-representation $$\rho^r:GL_n(k)\rightarrow GL_k(\bigwedge^rV).$$ Since $\rho:GL_n(k)\rightarrow GL_k(V)$ is a $GL_n(k)$-representation over $k$ at most degree $d$, $\rho^r$ is a $GL_n(k)$-representation over $k$ at most degree $dr$. (In fact, $GL_n(k)$ acts on $GL_k(V)/P$ which is linearized by the very ample generator $\Ls$ of $\Pic(GL_k(V)/P)$. This gives a $GL_n(k)$-invariant embedding of $GL_k(V)/P$ inside projective space $\PP(H^0(X,\Ls))$, and a $GL_n(k)$-equivalent isomorphism $\bigwedge^rV\cong H^0(X,\Ls)$.)

The $X$-group scheme $E(GL_n(k))$ acts naturally on the $X$-scheme $E(GL_k(V)/P)$ which is linearized by line bundle $E(\Ls)$. Let $E(G)_0$, $E(GL_k(V)/P)_0$ and $E(\Ls)_0$ denote the fiber of $E(G)$, $E(GL_k(V)/P)$ and $E(\Ls)$ over the generic point of $X$ respectively. Then $E(G)_0$ is a group scheme over function field $\Spec(k(X))$, and one has an $E(G)_0$-action on $E(GL_k(V)/P)_0$ defined over $\Spec(k(X))$,  which is linearized by line bundle $E(\Ls)_0$. One observes that $E(GL_n(k))_0$ and $E(GL_k(V)/P)_0$ became trivial after a finite separable field extension. Hence after change the base to $k(X)_s$, the separable closure of $k(X)$, we get canonical isomorphisms
$$E(GL_n(k))_0\otimes_{k(X)}k(X)_s\cong GL_n(k)\otimes_{k}k(X)_s\cong GL_n(k(X)_s),$$
$$E(GL_k(V)/P)_0\otimes_{k(X)}k(X)_s\cong(GL_k(V)/P)\otimes_{k}k(X)_s,$$
$$E(\Ls)_0\otimes_{k(X)}k(X)_s\cong\Ls\otimes_{k}k(X)_s,$$
and this isomorphisms being compatible with group actions. Let $V_s:=V\times_kk(X)_s$. The $E(GL_n(k))_0\otimes_{k(X)}k(X)_s$-action on $E(GL_k(V)/P)_0\otimes_{k(X)}k(X)_s$ induces a $GL_n(k(X)_s)$-equivalent embedding $$(GL_m(k)/P)\otimes_{k}k(X)_s\hookrightarrow\PP(\bigwedge^rV)\otimes_{k}k(X)_s\cong\PP(\bigwedge^rV_s),$$
which can be lift to a $GL_n(k(X)_s)$-representation $$\rho^r_s:GL_n(k(X)_s)\rightarrow GL_{k(X)_s}(\bigwedge^rV_s).$$
In fact, $\rho^r_s$ is just the base change of the $GL_n(k)$-representation $\rho^r$ from $k$ to $k(X)_s$. Hence $\rho^r_s$ is a $GL_n(k(X)_s)$-representation over $k(X)_s$ of dimension $C^r_m$ at most degree $dr$.

Let $\sigma:U\rightarrow E(GL_k(V)/P)$ be a rational reduction of structure group of $E$ to $P$, where $U$ is an open subscheme of $X$ with $\codim_X(X-U)\geq2$. It corresponds to a $k(X)$-rational point $\sigma_0\in E(GL_k(V)/P)_0(k(X))$. Let $T_\sigma$ denote the torsion free sheaf determined by the the locally free sheaf $\sigma^*(T_P)$ on $U$, where $T_P$ is the tangent bundle along the fibers of the map $E(GL_k(V)/P)\rightarrow X$. By Lemma \ref{InstbParabolic}, if $\sigma_0$ is a semistable point for the action of $E(GL_n(k))_0$ on $E(GL_k(V)/P)_0$ linearized by line bundle $E(\Ls)_0$, then $$\deg_HT_{\sigma}\geq 0.$$ On the other hand, if $\sigma_0$ is not a semistable point for the action of $E(GL_n(k))_0$ on $E(GL_k(V)/P)_0$ linearized by line bundle $E(\Ls)_0$, we would like to prove that $$\deg_HT_{\sigma}\geq 0.$$

View $\sigma_0$ as an $k(X)_s$-rational point in $(GL_k(V)/P)\otimes_{k}k(X)_s$, and lift this point to a point in $\bigwedge^rV_s$, denote by $\sigma_0$ again. Then by Theorem \ref{Thm:PolyRep}, we have the instability parabolic $P(\sigma_0)$ of $\sigma_0$ is defined over $k(X)^{p^{-l}}_s$ for any integer $l\geq C^r_m\cdot\log_p(dr)$. Then $P(\sigma_0)$ is actually defined over $k(X)^{p^{-l}}$ by the uniqueness of the instability parabolic and Galois descent argument.

Pulling back by the Frobenius morphism, the action of the generic fibre $$F^{l*}_X(E(GL_n(k)))_0\cong F^{l*}_{k(X)}(E(GL_n(k))_0)$$ on $$F^{l*}_X(E(GL_k(V)/P))_0\cong F^{l*}_{k(X)}(E(GL_m(V)/P)_0)$$ is the base change by $F^{l*}_{k(X)}$ of the $E(GL_n(k))_0$-action on $(GL_k(V)/P)_0$. The Frobenius $F^{l*}_{k(X)}$ factors through an isomorphism:
$$\xymatrix{
   & k(X) \ar[d]_{\cong} \ar[r]^{F^{l*}_{k(X)}} & k(X)\\
   & k(X)^{p^{-l}} \ar[ur]_{i_l}}$$
where $i_l:\Spec(k(X)^{p^{-l}})\rightarrow\Spec(k(X))$ is given by the inclusion $k(X)\subseteq k(X)^{p^{-l}}$.
Therefore for this $F^{l*}_{k(X)}(E(GL_n(k))_0)$-action, the instability parabolic of the point $F^{l*}_X(\sigma)_0$ is defined over $k(X)$.
Since $F^{l*}_X(E)$ is semistable, by Lemma \ref{InstbParabolic}, we have $$\deg_H(T_{F^{l*}_X(\sigma)})=\deg_H F^{l*}_X(T_{\sigma})=p^l\cdot\deg_H(T_{\sigma})\geq 0.$$
Thus $\deg_HT_{\sigma}\geq 0$. Therefore if $F^{N*}_X(E)$ is semistable for some integer $$N\geq\max_{0<r<m}C^r_m\cdot\log_p(dr),$$ then for any rational reduction $\sigma$ of structure group of rational $GL_m(k)$-bundle $E(GL_m(k))$ to any maximal parabolic subgroup $P\subset GL_m(k)$, we have $$\deg_H(T_{\sigma})\geq 0.$$ Hence $E(GL_m(k))$ is a semistable rational $GL_m(k)$-bundle. This completes the proof of this theorem.
\end{proof}

\section{The Semistability of Truncated Symmetric Powers}

The truncated symmetric powers was first introduced in \cite{Sun08} in order to study the semistability of Frobenius direct images. L. Li and F. Yu \cite{LiYu13} have studied the instability of $\T^l(\E)$ and show that $\T^l(\E)$ is strongly semistable when $\E$ is strongly semistable. In this section, we would like to continue the further study of the semistability $\T^l(\E)$, and give a sufficient condition for semistability of $\T^l(\E)$.

Now, we recall the construction and properties of truncated symmetric powers of vector spaces (See \cite[Section 3]{Sun08}).

Let $K$ be an arbitrary field, $V$ a $n$-dimensional $K$-vector space with standard representation of $GL_n(K)$. Let $l$ be a positive integer, $S_l$ the symmetric group of $l$ elements with a natural action on $V^{\otimes l}$ by $$(v_1\otimes\cdots\otimes v_l)\cdot\sigma=v_{\sigma_{(1)}}\otimes\cdots\otimes v_{\sigma_{(l)}}$$ for any $v_i\in V$ and any $\sigma\in S_l$.
Let $e_1,\cdots,e_n$ be a basis of $V$. For any non-negative partition $(k_1,\cdots,k_n)$ of $l$ (i.e. $l=\sum\limits^n_{i=1}k_i$, $k_i\geq 0,~1\leq i\leq n$), we define $$v(k_1,\cdots,k_n):=\sum\limits_{\sigma\in S_l}(e^{\otimes k_1}_1\otimes\cdots\otimes e^{\otimes k_n}_n)\cdot\sigma.$$
Let $\T^l(V)\subset V^{\otimes l}$ be the linear subspace generated by all vectors $$\{v(k_1,\cdots,k_n)~|~l=\sum\limits^n_{i=1}k_i,~k_i\geq 0,~1\leq i\leq n\}.$$
Then $\T^l(V)$ is a $GL_n(K)$-subrepresentation of $V^{\otimes l}$ with $$N(p,n,l):=\dim_k\T^l(V)=\sum^{l(p)}_{q=0}(-1)^q\cdot C^q_n\cdot C^{l-pq}_{n+l-q-1},$$
where $l(p)$ is the unique integer such that $0\leq l-l(p)\cdot p< p$.

If $\Char(K)=0$ then we have $GL_n(K)$-equivalent $\T^l(V)\cong\Sym^l(V)$ for any integer $l>0$. On the other hand, if $\Char(K)=p>0$, then we have $GL_n(K)$-equivalent $\T^l(V)\cong\Sym^l(V)$ when $0<l<p$ and $\T^l(V)=0$ for $l>n(p-1)$.

\begin{Proposition}
For any integer $l>0$, $\T^l(V)=(V^{\otimes l})^{S_l}$.
\end{Proposition}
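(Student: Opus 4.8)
The claim is that $\T^l(V)$, the span of the symmetrized monomials $v(k_1,\dots,k_n)$, coincides with the space of $S_l$-invariants $(V^{\otimes l})^{S_l}$. The plan is to prove the two inclusions separately, working throughout with the monomial basis $e_{i_1}\otimes\cdots\otimes e_{i_l}$ of $V^{\otimes l}$ indexed by functions $\{1,\dots,l\}\to\{1,\dots,n\}$, on which $S_l$ acts by permuting the tensor slots. For the inclusion $\T^l(V)\subseteq(V^{\otimes l})^{S_l}$, I would observe that each generator $v(k_1,\dots,k_n)=\sum_{\sigma\in S_l}(e_1^{\otimes k_1}\otimes\cdots\otimes e_n^{\otimes k_n})\cdot\sigma$ is manifestly $S_l$-invariant, since right-multiplying the summation index $\sigma$ by any fixed $\tau\in S_l$ merely permutes the terms of the sum; hence the whole span $\T^l(V)$ lies in $(V^{\otimes l})^{S_l}$. (One small point to be careful about: the action written in the excerpt is a right action, so I should check $v(k_1,\dots,k_n)\cdot\tau = v(k_1,\dots,k_n)$ using $(x\cdot\sigma)\cdot\tau = x\cdot(\sigma\tau)$ and the fact that $\sigma\mapsto\sigma\tau$ is a bijection of $S_l$.)

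For the reverse inclusion $(V^{\otimes l})^{S_l}\subseteq\T^l(V)$, the key point is that $S_l$ acts on the monomial basis, and the orbits of this action are exactly parametrized by the "content" of a monomial, i.e. by how many of the indices $i_1,\dots,i_l$ equal each value $1,\dots,n$ — which is precisely a non-negative partition $(k_1,\dots,k_n)$ of $l$. So I would take an arbitrary invariant element $w\in(V^{\otimes l})^{S_l}$, expand it in the monomial basis, and note that invariance forces the coefficient to be constant on each $S_l$-orbit. Therefore $w$ is a linear combination of the orbit sums, and the orbit sum corresponding to content $(k_1,\dots,k_n)$ is (up to a nonzero integer factor, namely the order of the stabilizer $\prod_i k_i!$, which in the definition is absorbed because $v(k_1,\dots,k_n)$ sums over all of $S_l$, with repetition) exactly $v(k_1,\dots,k_n)$, or a scalar multiple of it. Hence $w\in\T^l(V)$.

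The one genuine subtlety — and the step I'd treat most carefully — is the relationship between the orbit sum $\sum_{\text{monomials of content }(k_1,\dots,k_n)}(\text{monomial})$ and the generator $v(k_1,\dots,k_n)$, which is written as a sum over the \emph{entire} group $S_l$ applied to a single fixed monomial $e_1^{\otimes k_1}\otimes\cdots\otimes e_n^{\otimes k_n}$. Since that fixed monomial has stabilizer of order $\prod_{i=1}^n k_i!$, the sum over $S_l$ equals $\bigl(\prod_i k_i!\bigr)$ times the orbit sum. In characteristic zero this factor is invertible and causes no trouble; in characteristic $p>0$ it may vanish, but this is exactly the reason the \emph{truncation} occurs: when $\prod_i k_i!\equiv 0 \pmod p$ the generator $v(k_1,\dots,k_n)$ is zero, so it simply does not contribute. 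Thus I should phrase the argument so that $\T^l(V)$ is the span of the \emph{nonzero} $v(k_1,\dots,k_n)$'s, which coincides with the span of those orbit sums whose multiplicity $\prod_i k_i!$ is nonzero in $K$; and then I must check that $(V^{\otimes l})^{S_l}$ is likewise spanned only by those same orbit sums — i.e. that an orbit sum with $\prod_i k_i!\equiv 0$ is still a genuine invariant not captured by $v(k_1,\dots,k_n)$. Here I would either invoke the fact (compatible with the stated dimension formula $N(p,n,l)$, which is visibly a truncation of the full count of partitions) that $(V^{\otimes l})^{S_l}$ has the same dimension as $\T^l(V)$, or, more honestly, note that every orbit sum \emph{is} an invariant and is \emph{always} obtainable as $v(k_1,\dots,k_n)$ up to the scalar $\prod_i k_i!$ — so if that scalar is zero the generator is zero but the orbit sum is not, which would seem to break the equality. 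Resolving this apparent tension is the crux: I expect the resolution is that in the relevant range $0\le l\le n(p-1)$ every content partition of $l$ into $n$ parts has all $k_i\le p-1$, whence $\prod_i k_i!$ is a product of factorials of integers $<p$, hence a unit in $K$; so in fact $v(k_1,\dots,k_n)$ is always a nonzero scalar multiple of the orbit sum, no truncation is lost, and the two inclusions match up cleanly. I would state and use this range restriction explicitly to close the argument.
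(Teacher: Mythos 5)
Your first inclusion and your orbit decomposition are exactly the paper's argument: decompose $V^{\otimes l}$ into the $S_l$-stable ``content'' subspaces $W_{k_1,\dots,k_n}$, note that $S_l$ acts transitively on the monomial basis of each, so an invariant must have constant coefficients there, i.e.\ must be a multiple of the orbit sum. The crux you then isolate --- that $v(k_1,\dots,k_n)$ equals $\bigl(\prod_i k_i!\bigr)$ times the orbit sum, so the generator vanishes while the orbit sum survives whenever some $k_i\geq p$ --- is genuinely the heart of the matter. But your proposed resolution fails: the hypothesis $0\leq l\leq n(p-1)$ does \emph{not} force every content partition of $l$ to have all parts $\leq p-1$; it only guarantees that \emph{some} partition does. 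The partition $(l,0,\dots,0)$ has a part $\geq p$ as soon as $l\geq p$, and $p\leq l\leq n(p-1)$ is a nonempty range for every $n\geq 2$. So the tension you flagged is not apparent but real, and your argument does not close.

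Indeed the gap cannot be closed, because the statement as printed (``for any integer $l>0$'') is false in characteristic $p$ once $l\geq p$. Concretely, take $n=2$, $p=2$, $l=2$: then $\T^2(V)$ is spanned by the single vector $e_1\otimes e_2+e_2\otimes e_1$ (consistent with $N(2,2,2)=1$), whereas $(V^{\otimes 2})^{S_2}$ also contains $e_1\otimes e_1$ and $e_2\otimes e_2$ and is $3$-dimensional. In general $\dim(V^{\otimes l})^{S_l}$ is the number of \emph{all} non-negative $n$-part partitions of $l$, while $\dim\T^l(V)=N(p,n,l)$ counts only those with every part $\leq p-1$; these agree exactly when $l<p$ (or in characteristic $0$), which is the only range where the proposition holds. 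For what it is worth, the paper's own proof commits precisely the error you were trying to avoid: it asserts ``by simple observation, $\alpha_{k_1,\dots,k_n}=a\cdot v(k_1,\dots,k_n)$,'' which is impossible when $v(k_1,\dots,k_n)=0$ but the invariant orbit sum $\alpha_{k_1,\dots,k_n}$ is not. So your careful reading exposed a genuine error in the paper; the correct fix is to restrict the proposition to $l<p$, or to redefine $\T^l(V)$ as the span of the orbit sums rather than of the $v(k_1,\dots,k_n)$ (note the latter would change the dimension count $N(p,n,l)$ used throughout, so it is not a harmless substitution).
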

\begin{proof}
Fix a basis $e_1,\cdots,e_n$ of $V$. Let $(k_1,\cdots,k_n)$ be a non-negative partition of $l$, $W_{k_1,\cdots,k_n}$ the linear subspace of $V^{\otimes l}$ generated by vectors $$\{e_{i_1}\otimes\cdots\otimes e_{i_l}|~k_m=\sharp\{i_j|i_j=m,1\leq j\leq l\}, 1\leq m\leq n\}.$$
Thus $W_{(k_1,\cdots,k_n)}$ is a $S_l$-invariant linear subspace of $V^{\otimes l}$.

By definition, it is obvious that $\T^l(V)\subseteq(V^{\otimes l})^{S_l}$. It is easy to see that, any element in $(V^{\otimes l})^{S_l}$ can be expressed as the form $$\alpha=\sum_{l=\sum\limits^n_{i=1}k_i,k_i\geq 0}\alpha_{k_1,\cdots,k_n},$$ where $\alpha_{k_1,\cdots,k_n}\in W_{k_1,\cdots,k_n}$. Then we have $\alpha_{k_1,\cdots,k_n}\in(V^{\otimes l})^{S_l}$. In order to prove $\alpha\in\T^l(V)$, it suffices to show that $\alpha_{k_1,\cdots,k_n}\in\T^l(V)$.
By simple observation, we have $\alpha_{k_1,\cdots,k_n}=a\cdot v(k_1,\cdots,k_n)$ for some $a\in k$. It follows that $(V^{\otimes l})^{S_l}\subseteq\T^l(V)$. Hence $\T^l(V)=(V^{\otimes l})^{S_l}$.
\end{proof}

\begin{Proposition}\label{Prop:l}
Let $K$ be a field of characteristic $p>0$, $V$ a $n$-dimensional $K$-vector space. Then for any integer $0<l\leq n(p-1)$, the $GL_K(V)$-representation $\rho^l:GL_K(V)\rightarrow GL_K(\T^l(V))$ is a polynomial representation at most degree $l$.
\end{Proposition}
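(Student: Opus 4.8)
The plan is to show that each matrix entry of $\rho^l(g)$, with respect to a suitable basis of $\T^l(V)$, is a polynomial in the entries of $g\in GL_K(V)$ of degree at most $l$. Fix a basis $e_1,\dots,e_n$ of $V$ and write $g = (g_{ij})$ for the matrix of $g$, so that $g\cdot e_j = \sum_i g_{ij} e_i$. Since the standard representation on $V$ is polynomial of degree $1$, the induced representation on $V^{\otimes l}$ is polynomial of degree $l$: each coordinate of $g^{\otimes l}(e_{i_1}\otimes\cdots\otimes e_{i_l})$ in the monomial basis is a product of exactly $l$ entries $g_{\ast\ast}$, hence a homogeneous polynomial of degree $l$ in the $g_{ij}$. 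This is the key structural input, and it requires no characteristic hypothesis.

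Next I would use the two facts already established: $\T^l(V)=(V^{\otimes l})^{S_l}$ is a $GL_K(V)$-subrepresentation of $V^{\otimes l}$, and (by the previous Proposition and its proof) it has a basis consisting of the vectors $v(k_1,\dots,k_n)$ indexed by non-negative partitions $(k_1,\dots,k_n)$ of $l$. Because $\T^l(V)$ is a subrepresentation, $\rho^l(g)$ is simply the restriction of $g^{\otimes l}$ to $\T^l(V)$; thus to read off the matrix of $\rho^l(g)$ in the basis $\{v(k_1,\dots,k_n)\}$ I expand $g^{\otimes l}\bigl(v(k_1,\dots,k_n)\bigr)$, which lies in $\T^l(V)$ by invariance, and collect coefficients. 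Since $g^{\otimes l}$ applied to any monomial basis vector of $V^{\otimes l}$ has coefficients that are polynomials of degree exactly $l$ in the $g_{ij}$, and $v(k_1,\dots,k_n)$ is a $K$-linear combination (with integer coefficients coming from $S_l$) of such monomials, each coordinate of $g^{\otimes l}(v(k_1,\dots,k_n))$ in the monomial basis is again a polynomial of degree at most $l$. Finally, expressing this element back in the basis $\{v(k_1,\dots,k_n)\}$ of $\T^l(V)$ only involves selecting, for each target partition, the coefficient of one representative monomial (this is exactly the ``simple observation'' $\alpha_{k_1,\dots,k_n}=a\cdot v(k_1,\dots,k_n)$ used in the previous proof), so no division or denominators are introduced. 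Hence every entry of $\rho^l(g)$ is a polynomial of degree at most $l$ in the entries of $g$, which is precisely the assertion that $\rho^l$ is a polynomial representation at most degree $l$.

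The only point needing a little care — and the place I expect the main (minor) obstacle — is the bookkeeping that passing from the monomial-basis coordinates of $g^{\otimes l}(v(k_1,\dots,k_n))$ to its coordinates in the basis $\{v(k_1,\dots,k_n)\}$ does not raise the degree or introduce denominators; this is where one invokes that the change-of-basis is ``triangular'' in the sense that each $v(k_1,\dots,k_n)$ is, up to a nonzero rational scalar that does not depend on $g$, the $S_l$-symmetrization of a single monomial $e_1^{\otimes k_1}\otimes\cdots\otimes e_n^{\otimes k_n}$, so the relevant coordinate can be extracted by looking at the coefficient of that one monomial. The hypothesis $0<l\le n(p-1)$ enters only to guarantee $\T^l(V)\ne 0$ (for $l>n(p-1)$ the representation is zero and the statement is vacuous), so it plays no essential role in the degree bound itself.
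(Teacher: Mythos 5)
Your argument is correct and is essentially the paper's own proof: both rest on the observation that $g\mapsto g^{\otimes l}$ has matrix entries that are polynomials of degree $l$ in the $g_{ij}$, and then restrict to the invariant subspace $\T^l(V)$ in the basis $\{v(k_1,\dots,k_n)\}$. You additionally supply the one detail the paper leaves implicit under ``by the change of basis''---that reading off coordinates in this basis amounts to extracting the coefficient of a single representative monomial $e_1^{\otimes k_1}\otimes\cdots\otimes e_n^{\otimes k_n}$, whose multiplicity $k_1!\cdots k_n!$ is a nonzero constant of $K$ precisely because the basis partitions satisfy $k_i<p$, so no denominators in the $g_{ij}$ and no degree increase occur.
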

\begin{proof}
Fix a basis $e_1,\cdots,e_n$ of $V$. Endowing the basis $\{e_{i_1}\otimes\cdots\otimes e_{i_l}~|~1\leq i_j\leq n\}$ of $V^{\otimes l}$ with lexicographic order. Under this basis, the $GL_K(V)$-representation $V^{\otimes l}$ is equivalent to the homomorphism of algebraic groups $$\rho^l:GL_n(K)\rightarrow GL_{n^l}(K)$$
$$(g_{ij})\mapsto (h_{st}):=((g_{ij})\otimes\cdots\otimes(g_{ij})).$$
Therefore $\rho^l$ is given by a matrix of polynomials $h_{st}$ in $K[T_{ij}]_{1\leq i,j\leq n}$ of degree $l$ for any integers $1\leq s,t\leq n^l$. Then by the change of basis, under the basis $e_1,\cdots,e_n$, $$(\text{resp.}~\{v(k_1,\cdots,k_n)~|~l=\sum\limits^n_{i=1}k_i,~0\leq k_i<p,~1\leq i\leq n\})$$ of $V$ (resp. $\T^l(V)$), the $GL_K(V)$-representation $\rho^l:GL_K(V)\rightarrow GL_K(\T^l(V))$ is also give by a matrix of polynomials $f_{ij}$ in $K[T_{ij}]_{1\leq i,j\leq n}$ of degree $l$ for any integers $1\leq i,j\leq\dim\T^l(V)$. Hence $\rho^l:GL_K(V)\rightarrow GL_K(\T^l(V))$ is a polynomial representation at most degree $l$.
\end{proof}

\begin{Corollary}
Let $K$ be a field of characteristic $p>0$, $V$ a $n$-dimensional $K$-vector space, $l$ an integer with $0<l\leq n(p-1)$. Then the instability parabolic of any unstable $K$-rational point in the $GL_K(V)$-representation space $\T^l(V)$ is defined over $K^{p^{-N}}$ for some integer $N\geq\log_p(l)\cdot\dim_K\T^l(V)$, i.e., $$N\geq\log_p(l)\cdot\sum^{l(p)}_{q=0}(-1)^q\cdot C^q_n\cdot C^{l-pq}_{n+l-q-1},$$
where $l(p)$ is the unique integer such that $0\leq l-l(p)\cdot p< p$.
\end{Corollary}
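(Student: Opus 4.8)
The plan is to deduce this immediately from Theorem \ref{Thm:PolyRep} and Proposition \ref{Prop:l}, so the argument is essentially bookkeeping. First I would fix a basis $e_1,\dots,e_n$ of $V$, which furnishes an identification $GL_K(V)\cong GL_n(K)$; under it the truncated symmetric power representation $\rho^l:GL_K(V)\rightarrow GL_K(\T^l(V))$ becomes a $GL_n(K)$-representation over $K$ whose representation space $\T^l(V)$ has dimension $\dim_K\T^l(V)=N(p,n,l)=\sum_{q=0}^{l(p)}(-1)^q C^q_n C^{l-pq}_{n+l-q-1}$, the value recorded before Proposition \ref{Prop:l}.

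Next, since $0<l\leq n(p-1)$, Proposition \ref{Prop:l} applies and tells us that $\rho^l$ is a polynomial representation at most degree $l$. A polynomial representation is the special case of a $GL_n(K)$-representation over $K$ at most degree $d$ in which all the exponents $a_{ij}$ vanish, so $\rho^l$ is in particular a $GL_n(K)$-representation over $K$ at most degree $l$ — precisely the kind of input Theorem \ref{Thm:PolyRep} accepts.

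Now I would apply Theorem \ref{Thm:PolyRep} to $\rho^l$, with the ambient space of the theorem taken to be $\T^l(V)$, so that the theorem's parameter $m$ is $\dim_K\T^l(V)$ and its degree parameter $d$ is $l$. The theorem then yields that the instability parabolic of any unstable $K$-rational point of $\T^l(V)$ is defined over $K^{p^{-N}}$ for some positive integer $N\geq m\log_p(d)=\log_p(l)\cdot\dim_K\T^l(V)$, exactly as in the statement of Theorem \ref{Thm:PolyRep}. Substituting the explicit formula for $\dim_K\T^l(V)$ from the first paragraph gives the claimed numerical bound, and since $v$ was an arbitrary unstable point we are done.

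There is no real obstacle here; the only thing requiring a moment of care is keeping the notational roles straight — the ``$V$'' of Theorem \ref{Thm:PolyRep} is our $\T^l(V)$, its ``$m$'' is $N(p,n,l)$ (not $n=\dim_K V$), and its ``$d$'' is $l$ — together with checking, as above, that ``polynomial representation at most degree $l$'' is a legitimate instance of ``$GL_n(K)$-representation over $K$ at most degree $l$''. Once those identifications are pinned down the corollary is immediate.
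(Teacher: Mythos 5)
Your proposal is correct and follows exactly the paper's own route: invoke Proposition \ref{Prop:l} to see that $\rho^l$ is a polynomial representation at most degree $l$, then apply Theorem \ref{Thm:PolyRep} with $m=\dim_K\T^l(V)=N(p,n,l)$ and $d=l$. The extra care you take in noting that a polynomial representation is a special case of a $GL_n(K)$-representation at most degree $d$, and in keeping the roles of $m$, $d$, and the representation space straight, is exactly the right bookkeeping and matches the paper's argument.
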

\begin{proof}
By Proposition \ref{Prop:l}, we have $\rho^l:GL_K(V)\rightarrow GL_K(\T^l(V))$ a polynomial representation of $GL_K(V)$ at most degree $l$. Hence by Theorem \ref{Thm:PolyRep}, if an integer $N\geq(\dim\T^l(V))\cdot\log_p(l)$, then the instability parabolic of any unstable $K$-rational point in $\T^l(V)$ is defined over $K^{p^{-N}}$.
\end{proof}

Let $X$ be a smooth variety over an algebraically closed field $k$ of characteristic $p>0$, $\E$ a locally free sheaf of rank $n$ on $X$. Then the locally free sheaf $\T^l(\E)\subset\E^{\otimes l}$ is defined to be the sheaf of sections of the associated vector bundle of the frame bundle of $\E$ through the representation $\T^l(V)$ (See \cite[Definition 3.4]{Sun08}).

\begin{Theorem}\label{Thm:T(V)}
Let $k$ be an algebraically closed field of characteristic $p>0$, $X$ a smooth projective variety over $k$ with a fixed ample divisor $H$. Let $\E$ be a torsion free sheaf of rank $n$ on $X$ such that $F^{N*}_X(\E)$ is semistable for some integer
$$N\geq\max_{1\leq r\leq N(p,n,l)}C^r_{N(p,n,l)}\cdot\log_p(lr),$$
where $N(p,n,l)=\sum^{l(p)}_{q=0}(-1)^q\cdot C^q_n\cdot C^{l-pq}_{n+l-q-1}$, and $l(p)$ is the unique integer such that $0\leq l-l(p)\cdot p< p$. Then the torsion free sheaf $\T^l(\E)$ is also a semistable sheaf.
\end{Theorem}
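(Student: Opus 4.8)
The plan is to reduce the semistability of $\T^l(\E)$ to Theorem \ref{Thm:StbExtGroup} applied to an appropriate $GL_n(k)$-representation, namely the representation $\rho^l:GL_n(k)\rightarrow GL_k(\T^l(V))$ realizing the truncated symmetric power. First I would pass from the torsion free sheaf $\E$ to its associated rational $GL_n(k)$-bundle $E$: on the big open subscheme $U\subseteq X$ where $\E$ is locally free, $\E|_U$ is the sheaf of sections of a genuine $GL_n(k)$-bundle (the frame bundle), and since $\codim_X(X-U)\geq 2$, this defines a rational $GL_n(k)$-bundle $E$ with $E(GL_n(k))\leftrightarrow\E$ under the correspondence recalled in Section 2. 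Crucially, the construction of $\T^l(\E)$ via the frame bundle and the representation $\T^l(V)$ means precisely that $\T^l(\E)$ is the reflexive torsion free sheaf corresponding to the induced rational $GL_{N(p,n,l)}(k)$-bundle $E(GL_k(\T^l(V)))$. Here I must note that the hypothesis $0\leq l\leq(p-1)\cdot\dim X$ from the theorem statement (which appears in the displayed excerpt) guarantees $\T^l(V)\neq 0$, and in fact, by Proposition \ref{Prop:l}, whenever $0<l\leq n(p-1)$ the representation $\rho^l$ is polynomial of degree at most $l$; the degenerate cases $l=0$ (where $\T^0(\E)=\Ox_X$ is trivially semistable) and the range $n(p-1)<l\leq(\dim X)(p-1)$ (where $\T^l(V)=0$, so there is nothing to prove, or one restricts attention to $l\le n(p-1)$) should be disposed of separately at the outset.

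Next I would verify that $\rho^l$ satisfies the radical hypothesis implicit throughout the paper, i.e. that $\rho^l$ maps $R(GL_n(k))=\Gm$ (the scalars) into $R(GL_k(\T^l(V)))=\Gm$ (the scalars). This is immediate: a scalar $t\cdot\Id_V$ acts on $V^{\otimes l}$, hence on the subrepresentation $\T^l(V)$, as $t^l\cdot\Id$, which is central. So $\rho^l$ is a legitimate representation in the sense required by Theorem \ref{Thm:StbExtGroup}. With these points checked, the degree bound from Proposition \ref{Prop:l} says $\rho^l$ has degree at most $d:=l$ and dimension $m:=N(p,n,l)$, so Theorem \ref{Thm:StbExtGroup} applies with these values of $d$ and $m$: if $F^{N*}_X(E)$ is semistable for $N\geq\max_{0<r<N(p,n,l)}C^r_{N(p,n,l)}\cdot\log_p(lr)$, then $E(GL_k(\T^l(V)))$ is semistable. (The range $1\leq r\leq N(p,n,l)$ in the theorem statement versus $0<r<N(p,n,l)$ in Theorem \ref{Thm:StbExtGroup} differs only in the endpoint $r=N(p,n,l)$, which contributes a parabolic that is all of $GL_k(\T^l(V))$ and imposes no condition, or $r=N(p,n,l)-1$ vs $r=N(p,n,l)$; in any case the larger maximum in the statement of Theorem \ref{Thm:T(V)} only makes the hypothesis stronger, so the implication still goes through.)

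Finally I would translate back: $F^{N*}_X(\E)$ semistable as a torsion free sheaf is equivalent to $F^{N*}_X(E)$ semistable as a rational $GL_n(k)$-bundle (this is the dictionary from Section 2.2, together with the fact that Frobenius pullback commutes with passing to the frame bundle), and likewise $E(GL_k(\T^l(V)))$ semistable as a rational bundle is equivalent to $\T^l(\E)$ semistable as a torsion free sheaf in the sense of Mumford--Takemoto. Chaining these equivalences with the conclusion of Theorem \ref{Thm:StbExtGroup} gives the result. The main obstacle, such as it is, is not any deep argument but rather the bookkeeping of the two identifications: (i) that $\T^l(\E)$ really is the associated sheaf $E(GL_k(\T^l(V)))$ for the representation $\rho^l$ — this needs the functoriality of the associated-bundle construction and the definition of $\T^l(\E)$ via the frame bundle from \cite{Sun08} — and (ii) that the correspondence between rational $GL_n(k)$-bundles and reflexive torsion free sheaves is compatible with both Frobenius pullback and the formation of $\T^l$, so that semistability is matched on both sides; once these are in place the theorem is a direct corollary of Theorem \ref{Thm:StbExtGroup} and Proposition \ref{Prop:l}.
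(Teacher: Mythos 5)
Your proposal is correct and follows exactly the paper's route: identify $\T^l(\E)$ as the sheaf associated to the rational $GL_n(k)$-bundle of $\E$ via the representation $\rho^l$, use Proposition \ref{Prop:l} to see that $\rho^l$ is a polynomial representation of dimension $N(p,n,l)$ and degree at most $l$, and apply Theorem \ref{Thm:StbExtGroup} with $d=l$, $m=N(p,n,l)$. The paper's own proof is a two-line version of this; your additional checks (the radical condition, the degenerate cases $l=0$ and $\T^l(V)=0$, and the harmless discrepancy in the range of $r$) are sound bookkeeping that the paper leaves implicit.
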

\begin{proof}
Let $V$ be a $n$-dimensional $k$-vector space. Then by Proposition \ref{Prop:l}, the $GL_K(V)$-representation on $\T^l(V)$ is a polynomial representation of dimension $N(p,n,l)$ at most degree $l$. Hence this Proposition followed by Theorem \ref{Thm:StbExtGroup}.
\end{proof}

\section{Semistability of Frobenius direct image of $\rho_*(\Omg^1_X)$}

Let $k$ be an algebraically closed field of characteristic $p>0$, $X$ a smooth projective surface with a fixed ample divisor $H$ such that $\Omg^1_X$ is semistable with $\deg_H(\Omg^1_X)>0$, and $\Ls\in\Pic(X)$. Y. Kitadai, H. Sumihiro \cite[Theorem 3.1]{KitadaiSumihiro08} showed that the ${F_X}_*(\Omg^1_X)$ is semistable. Moreover, X. Sun \cite[Theorem 4.11]{Sun10} generalized the \cite[Theorem 3.1]{KitadaiSumihiro08} and showed that ${F_X}_*(\Omg^1_X)$ is stable if $\Omg^1_X$ is stable. In this section, we would like to study the semistability of Frobenius direct image of $\rho_*(\Omg^1_X)$ on higher dimensional base space, where $\rho_*(\Omg^1_X)$ is the locally free sheaf obtained from $\Omg^1_X$ via a $GL_n(k)$-representation $\rho$.

\begin{Lemma}\cite[Theorem 4.8]{Sun08}\label{Lem:SunStb}
Let $k$ be an algebraically closed field of characteristic $p>0$, $X$ a smooth projective variety over $k$ of dimension $n$ with a fixed ample divisor $H$ such that $\deg_H(\Omg^1_X)\geq 0$. Let $\E$ be a torsion free sheaf on $X$ such that $\E\otimes\T^l(\Omg^1_X)$, $1\leq l\leq n(p-1)$, are semistable sheaves. Then the Frobenius direct image ${F_X}_*(\E)$ is a semistable sheaf.
\end{Lemma}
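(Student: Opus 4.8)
The plan is to reprove this via the canonical filtration of the Frobenius push-forward, following X. Sun. Write $F=F_X$ and $N=n(p-1)$, and recall that $F^*(F_*\E)$ carries a canonical (Cartier) connection $\nabla$ together with a filtration $0=V_{N+1}\subseteq V_N\subseteq\cdots\subseteq V_1\subseteq V_0=F^*(F_*\E)$ whose graded pieces are $V_l/V_{l+1}\cong\E\otimes\T^l(\Omg^1_X)$ for $0\le l\le N$ (in particular $V_0/V_1\cong\E$; these are the only nonzero pieces, as $\T^l(\Omg^1_X)=0$ for $l>N$), with $\nabla(V_l)\subseteq V_{l-1}\otimes\Omg^1_X$ and the induced graded maps $V_l/V_{l+1}\hookrightarrow(V_{l-1}/V_l)\otimes\Omg^1_X$ injective. (One may assume $\E$ locally free, replacing it by its reflexive hull.) Note first that the hypothesis forces $\E$ itself to be semistable: taking $l=1$, the sheaf $\T^1(\Omg^1_X)=\Omg^1_X$ is locally free, so a destabilizing subsheaf of $\E$ would produce one of $\E\otimes\Omg^1_X$; hence $\E\otimes\T^l(\Omg^1_X)$ is semistable for every $0\le l\le N$.

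First I would record the slopes of the graded pieces. Every weight of the $GL_n$-representation $\T^l(V)$ has total degree $l$ and there are $N(p,n,l)$ of them, so $\det\T^l(V)\cong(\det V)^{l\cdot N(p,n,l)/n}$, whence $\mu\bigl(\E\otimes\T^l(\Omg^1_X)\bigr)=\mu(\E)+\tfrac ln\deg_H(\Omg^1_X)$. Since $\det F^*\mathcal M=(\det\mathcal M)^{\otimes p}$ one has $\mu(F^*\mathcal M)=p\,\mu(\mathcal M)$ for any $\mathcal M$, so the rank-weighted average of the slopes of the $V_l/V_{l+1}$ gives $p\,\mu(F_*\E)=\mu\bigl(F^*(F_*\E)\bigr)=\mu(\E)+\tfrac{p-1}{2}\deg_H(\Omg^1_X)$, where one uses $\sum_{l=0}^{N}N(p,n,l)=p^{n}$ and $\sum_{l=0}^{N}l\cdot N(p,n,l)=\tfrac N2\,p^{n}$ — the latter from the symmetry $N(p,n,l)=N(p,n,N-l)$ given by the involution $(k_1,\dots,k_n)\mapsto(p-1-k_1,\dots,p-1-k_n)$ on partitions.

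Now let $\mathcal G\subseteq F_*\E$ be an arbitrary nonzero subsheaf; it suffices to show $\mu(\mathcal G)\le\mu(F_*\E)$. As $F$ is finite flat, $F^*$ is exact, so $F^*\mathcal G\hookrightarrow F^*(F_*\E)$, and this subsheaf is stable under $\nabla$. Put $W_l:=F^*\mathcal G\cap V_l$; then $\nabla(W_l)\subseteq W_{l-1}\otimes\Omg^1_X$, the graded maps $W_l/W_{l+1}\hookrightarrow(W_{l-1}/W_l)\otimes\Omg^1_X$ are injective, and $W_l/W_{l+1}\hookrightarrow V_l/V_{l+1}=\E\otimes\T^l(\Omg^1_X)$ realizes $W_l/W_{l+1}$ as a subsheaf of a semistable sheaf, so $\deg_H(W_l/W_{l+1})\le r_l\bigl(\mu(\E)+\tfrac ln\deg_H\Omg^1_X\bigr)$ with $r_l:=\rk(W_l/W_{l+1})$. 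Summing over $l$, using $\mu(\mathcal G)=\tfrac1p\mu(F^*\mathcal G)$ and $\rk\mathcal G=\sum_l r_l$, and comparing with the formula for $\mu(F_*\E)$ above, the claim reduces to the inequality $\sum_{l}l\,r_l\le\tfrac N2\sum_l r_l$; when $\deg_H\Omg^1_X=0$ there is nothing left to prove.

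This rank inequality is the main obstacle, and I expect essentially all of the work to lie here: it says that the intersection filtration $(W_l)$ of a $\nabla$-stable subsheaf cannot be concentrated in high levels. The tools are the injectivity of the graded connection maps $V_l/V_{l+1}\hookrightarrow(V_{l-1}/V_l)\otimes\Omg^1_X$ together with the precise identification of this map with the polarization embedding $\E\otimes\T^l(\Omg^1_X)\hookrightarrow\E\otimes\T^{l-1}(\Omg^1_X)\otimes\Omg^1_X$; what one really needs is the representation-theoretic dimension estimate bounding $\dim\bigl((U\otimes V)\cap\T^l(V)\bigr)$ in terms of $\dim U$ for subspaces $U\subseteq\T^{l-1}(V)$, uniformly enough that, propagated through the $N$ levels, it yields $\sum_l l\,r_l\le\tfrac N2\sum_l r_l$ (this is carried out in \cite[Section 4]{Sun08}). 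As a sanity check: a nonzero $\nabla$-stable subsheaf is never contained in $V_1$ (apply injectivity of $\overline\nabla$ at the level $l_0=\max\{l:W_0\subseteq V_l\}$), so when $\mathcal G$ has rank one one gets $r_0=\rk\mathcal G$ and $r_l=0$ for $l\ge1$, and the inequality is trivial; when $\mathcal G=F_*\E$ it is an equality. Granting the inequality, $\mu(\mathcal G)\le\mu(F_*\E)$ for every subsheaf $\mathcal G$, so $F_*\E$ is semistable.
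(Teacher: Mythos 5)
The paper offers no proof of this lemma at all---it is quoted verbatim from \cite[Theorem 4.8]{Sun08}---so the only comparison available is with Sun's original argument, and your outline reproduces it faithfully: the canonical filtration $V_\bullet$ of $F_X^*({F_X}_*\E)$ with graded pieces $\E\otimes\T^l(\Omg^1_X)$, the Cartier connection, the slope bookkeeping (your computation of $\mu(\E\otimes\T^l(\Omg^1_X))$, the identity $\sum_l l\cdot N(p,n,l)=\tfrac{N}{2}p^n$ via the involution $k_i\mapsto p-1-k_i$, and the reduction of semistability of ${F_X}_*\E$ to the inequality $\sum_l l\,r_l\le\tfrac{N}{2}\sum_l r_l$ for the induced filtration of a $\nabla$-stable subsheaf) are all correct. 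Be aware, however, that the one step you defer to \cite[Section 4]{Sun08} --- the rank estimate for the intersection filtration of a $\nabla$-stable subsheaf --- is not a routine verification but the actual content of Sun's theorem (his key combinatorial lemma on $\T^l(V)\subset\T^{l-1}(V)\otimes V$), so what you have written is an accurate and well-checked roadmap of the known proof rather than a self-contained one.
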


\begin{Lemma}\label{Lem:TensorRep}
Let $K$ be a field, $V$, $W_1$ and $W_2$ are $k$-vector spaces. Let $$\rho_l:GL_K(V)\rightarrow GL_K(W_l)$$ be $GL_K(V)$-representation at most degree $d_l$, $l=1,2$. Then the $GL_K(V)$-representation $$\rho_1\otimes\rho_2:GL_K(V)\rightarrow GL_K(W_1\otimes W_2)$$ at most degree $d_1+d_2$.
\end{Lemma}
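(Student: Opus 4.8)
The plan is to reduce this to a statement about the matrices of regular functions defining the two representations, since "at most degree $d$" was defined precisely in terms of such matrices. First I would recall the setup: if $\rho_l:GL_K(V)\to GL_K(W_l)$ is given, after fixing bases of $V$, $W_1$, $W_2$, by an $(\dim W_l)\times(\dim W_l)$-matrix whose entries are regular functions $f^{(l)}_{ij}/\det(T_{ij})^{a^{(l)}_{ij}}$ on $GL_n(K)$ with $f^{(l)}_{ij}\in K[T_{ij}]$ coprime to $\det(T_{ij})$, then the tensor product representation $\rho_1\otimes\rho_2$ is, in the tensor-product basis $\{u_i\otimes w_j\}$ of $W_1\otimes W_2$, given by the Kronecker product of these two matrices; that is, its $((i,j),(i',j'))$-entry is the product of the $(i,i')$-entry of $\rho_1$'s matrix with the $(j,j')$-entry of $\rho_2$'s matrix.

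Next I would carry out the degree bookkeeping dictated by the definition in the paper. Let $a_l:=\max_{i,j}a^{(l)}_{ij}$, so that $d_l=\max_{i,j}\{\deg f^{(l)}_{ij}+n(a_l-a^{(l)}_{ij})\}$. The product of entries $\frac{f^{(1)}_{ii'}}{\det^{a^{(1)}_{ii'}}}\cdot\frac{f^{(2)}_{jj'}}{\det^{a^{(2)}_{jj'}}}$ equals $\frac{f^{(1)}_{ii'}f^{(2)}_{jj'}}{\det^{a^{(1)}_{ii'}+a^{(2)}_{jj'}}}$; the numerator $f^{(1)}_{ii'}f^{(2)}_{jj'}$ lies in $K[T_{ij}]$ and remains coprime to $\det(T_{ij})$ (a product of two polynomials coprime to the irreducible-up-to-units $\det$ is again coprime to it), so this is the reduced form of the entry. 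With $a:=\max=a_1+a_2$ the relevant quantity for the product representation is $\deg(f^{(1)}_{ii'}f^{(2)}_{jj'})+n(a-(a^{(1)}_{ii'}+a^{(2)}_{jj'}))$, which splits as $\bigl(\deg f^{(1)}_{ii'}+n(a_1-a^{(1)}_{ii'})\bigr)+\bigl(\deg f^{(2)}_{jj'}+n(a_2-a^{(2)}_{jj'})\bigr)\le d_1+d_2$. Taking the maximum over $(i,i'),(j,j')$ gives exactly the bound $d_1+d_2$, so $\rho_1\otimes\rho_2$ is at most degree $d_1+d_2$.

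I do not expect a serious obstacle here; the only point requiring a little care is that the entries of the Kronecker product are genuinely in reduced form, i.e. that multiplying the two already-reduced fraction representations does not accidentally create a common factor with $\det(T_{ij})$ — this is handled by the coprimality-under-products remark above, using that $K[T_{ij}]$ is a UFD and $\det(T_{ij})$ factors into irreducibles none of which divide either numerator. One should also note the harmless subtlety that changing the chosen bases of $W_1$, $W_2$ (or of $W_1\otimes W_2$) only changes the matrix of $\rho_1\otimes\rho_2$ by constant invertible base-change matrices, which affects neither the $\det$-denominators nor the polynomial degrees, so the "at most degree" bound is basis-independent as required. Finally, I would remark that this lemma, combined with Proposition \ref{Prop:l} and Theorem \ref{Thm:StbExtGroup}, is precisely what is needed to control the degree of the representation $\rho\otimes\rho^l$ on $\rho_*(\Omg^1_X)\otimes\T^l(\Omg^1_X)$ appearing in the hypotheses of Lemma \ref{Lem:SunStb}, which is the intended application in the proof of Theorem \ref{Thm:FroRep}.
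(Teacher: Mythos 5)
Your proof is correct and follows essentially the same route as the paper: both identify the matrix of $\rho_1\otimes\rho_2$ with the Kronecker product of the matrices of $\rho_1$ and $\rho_2$ and then track the degrees of numerators and the $\det$-exponents. You simply carry out the degree bookkeeping (and the coprimality-to-$\det$ check) in more detail than the paper, which states the conclusion after exhibiting $M_1\otimes M_2$; no substantive difference.
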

\begin{proof}
The $GL_K(V)$-representation $\rho_l$ is given by a matrix $M_l$ of regular functions $$\frac{f^{(l)}_{ij}}{\det(T_{ij})^{a_l}}\in K[T_{ij},\det(T_{ij})^{-1}]_{1\leq i,j\leq\dim_KV},$$ where $a_l\in\N$ and $f^{(l)}_{ij}\in K[T_{ij}]_{1\leq i,j\leq\dim_KV}$ with $\deg(f^{(l)}_{ij})\leq d_l$, $l=1,2$. Then $GL_K(V)$-representation $$\rho_1\otimes\rho_2:GL_K(V)\rightarrow GL_K(W_1\otimes W_2)$$ is determined by the matrix $M_1\otimes M_2$ of regular functions on $GL_K(V)$. Hence it is at most degree $d_1+d_2$.
\end{proof}

\begin{Proposition}\label{Prop:StbTenTru}
Let $k$ be an algebraically closed field of characteristic $p>0$, $X$ a smooth projective variety over $k$ with a fixed ample divisor $H$, and $\rho:GL_n(k)\rightarrow GL_m(k)$ a $GL_n(k)$-representation at most degree $d$. Let $\E$ be a torsion free sheaf of rank $n$ on $X$, and $F^{N*}_X(\E)$ is semistable for some integer
$$N\geq\max_{0<r<m\cdot N(p,n,l)}C^r_{m\cdot N(p,n,l)}\log_p((d+l)r).$$
where $N(p,n,l)=\sum^{l(p)}_{q=0}(-1)^q\cdot C^q_n\cdot C^{l-pq}_{n+l-q-1}$, and $l(p)$ is the unique integer such that $0\leq l-l(p)\cdot p< p$. Then the torsion free part of $\rho_*(\E)\otimes\T^l(\E)$ is also a semistable sheaf, where $\rho_*(\E)$ is the torsion free sheaf obtained from $\E$ via the representation $\rho$.
\end{Proposition}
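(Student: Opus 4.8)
The plan is to realize the torsion free part of $\rho_*(\E)\otimes\T^l(\E)$ as the sheaf associated to $\E$ through a \emph{single} $GL_n(k)$-representation of controlled dimension and degree, namely the tensor product of $\rho$ with the truncated symmetric power representation $\rho^l$, and then quote Theorem \ref{Thm:StbExtGroup}. We may assume $1\le l\le n(p-1)$: the case $l=0$ is exactly Theorem \ref{Thm:StbExtGroup} applied to $\rho$ (note $N(p,n,0)=1$), and if $l>n(p-1)$ then $\T^l(\E)=0$ so the statement is vacuous.

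\textbf{Step 1: the representation.} Fix an $n$-dimensional $k$-vector space $V$ with standard action $GL_k(V)\cong GL_n(k)$. We have the representation $\rho\colon GL_k(V)\to GL_k(W)$ with $\dim_kW=m$ at most degree $d$, and, by Proposition \ref{Prop:l}, the representation $\rho^l\colon GL_k(V)\to GL_k(\T^l(V))$ is a polynomial representation of dimension $N(p,n,l)$ at most degree $l$. Form
$$\rho\otimes\rho^l\colon GL_k(V)\longrightarrow GL_k\bigl(W\otimes\T^l(V)\bigr),$$
which by Lemma \ref{Lem:TensorRep} is a $GL_n(k)$-representation of dimension $m\cdot N(p,n,l)$ at most degree $d+l$. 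It satisfies the standing hypothesis on radicals: a scalar $tI$ acts on $\T^l(V)\subset V^{\otimes l}$ as $t^l\cdot\Id$, and $\rho(tI)$ is a scalar matrix since $\rho$ maps $R(GL_n(k))$ into the scalars of $GL_m(k)$, so $(\rho\otimes\rho^l)(tI)=t^l\,\rho(tI)\otimes\Id$ is scalar.

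\textbf{Step 2: identifying the associated sheaf.} Let $U\subseteq X$ be the big open subscheme on which $\E$ is locally free, and let $E$ be the corresponding rational $GL_n(k)$-bundle (the frame bundle of $\E|_U$). By functoriality of the associated-bundle construction together with the definitions of $\rho_*(-)$ and of $\T^l(-)$ (see \cite[Definition 3.4]{Sun08}), the bundle associated to $E$ through $\rho\otimes\rho^l$ is, over $U$, the locally free sheaf $\rho_*(\E)|_U\otimes\T^l(\E)|_U$. Passing to reflexive hulls (equivalently, taking the torsion free part over $X$) identifies the rational $GL_{m\cdot N(p,n,l)}(k)$-bundle $E\bigl(GL_{m\cdot N(p,n,l)}(k)\bigr)$ with the rational bundle attached to the torsion free part of $\rho_*(\E)\otimes\T^l(\E)$, compatibly with the semistability dictionary used in Lemma \ref{InstbParabolic} and Theorem \ref{Thm:StbExtGroup}.

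\textbf{Step 3: apply Theorem \ref{Thm:StbExtGroup}.} Since $F^{N*}_X(\E)$ is semistable, the rational $GL_n(k)$-bundle $F^{N*}_X(E)$ is semistable. Apply Theorem \ref{Thm:StbExtGroup} with $(\rho\otimes\rho^l,\ m\cdot N(p,n,l),\ d+l)$ in place of $(\rho,m,d)$; the numerical hypothesis required there is precisely
$$N\ \ge\ \max_{0<r<m\cdot N(p,n,l)}C^r_{m\cdot N(p,n,l)}\log_p\bigl((d+l)r\bigr),$$
which is our assumption. Hence $E\bigl(GL_{m\cdot N(p,n,l)}(k)\bigr)$ is a semistable rational bundle, i.e. the torsion free part of $\rho_*(\E)\otimes\T^l(\E)$ is a semistable sheaf.

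\textbf{Main obstacle.} The one place needing care is Step 2: checking rigorously that the tensor product of the sheaf built from $\rho$ with the truncated symmetric power $\T^l(\E)$ of \cite{Sun08} genuinely equals the sheaf associated to $\E$ via $\rho\otimes\rho^l$, and that replacing $U$-locally-free sheaves by their reflexive extensions is harmless for the notion of semistability of rational $GL$-bundles. The dimension and degree bookkeeping (Proposition \ref{Prop:l} and Lemma \ref{Lem:TensorRep}) and the final invocation of Theorem \ref{Thm:StbExtGroup} are then immediate.
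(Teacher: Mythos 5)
Your proof is correct and takes essentially the same route as the paper's: both form the tensor representation $\rho\otimes\rho^l$ of dimension $m\cdot N(p,n,l)$ and degree at most $d+l$ via Proposition \ref{Prop:l} and Lemma \ref{Lem:TensorRep}, identify the associated bundle with the torsion free part of $\rho_*(\E)\otimes\T^l(\E)$, and then invoke Theorem \ref{Thm:StbExtGroup}. Your extra checks (the radical condition for $\rho\otimes\rho^l$ and the edge cases $l=0$ and $l>n(p-1)$) are details the paper leaves implicit.
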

\begin{proof}
Let $V$ and $W$ be $k$-vector spaces of dimension $n$ and $m$ with fixed bases respectively. Then there are natural isomorphisms $GL_n(k)\cong GL_k(V)$ and $GL_m(k)\cong GL_k(W)$ with respect to the given bases. The corresponding representation $\rho:GL_k(V)\rightarrow GL_k(W)$ is still denoted by $\rho$.

Let the torsion free sheaf $\E$ on $X$ corresponds to the rational $GL_k(V)$-bundle $E$ on $X$. Then the torsion free part of $\rho_*(\E)\otimes\T^l(\E)$ corresponds to the rational $GL_k(W\otimes_k\T^l(V))$-bundle $E(GL_k(W\otimes_k\T^l(V)))$ which is obtained from $E$ by extension of structure group to $GL_k(W\otimes_k\T^l(V))$ via the natural representation $$\widetilde{\rho}^l:=\rho^l\otimes\rho:GL_k(V)\rightarrow GL_k(W\otimes_k\T^l(V)).$$

One observes that $\widetilde{\rho}^l$ is a $GL_k(V)$-representation over $k$ at most degree $d+l$ by Lemma \ref{Lem:TensorRep}. Hence, by Theorem \ref{Thm:StbExtGroup}, if $F^{N*}_X(\E)$ is semistable for some integer
$$N\geq\max_{0<r<m\cdot N(p,n,l)}C^r_{m\cdot N(p,n,l)}\log_p((d+l)r),$$
then $E(GL_k(W\otimes_k\T^l(V)))$ is a semistable rational $GL_k(W\otimes_k\T^l(V))$-bundle, i.e., the torsion free part of $\rho_*(\E)\otimes\T^l(\E)$ is a semistable sheaf.
\end{proof}

\begin{Theorem}\label{Thm:FroRep}
Let $k$ be an algebraically closed field of characteristic $p>0$, $X$ a smooth projective variety over $k$ of dimension $n$ with a fixed ample divisor $H$ such that $\deg_H(\Omg^1_X)\geq 0$, $\rho:GL_n(k)\rightarrow GL_m(k)$ a $GL_n(k)$-representation at most degree $d$. Suppose that $F^{N*}_X(\Omg^1_X)$ is semistable for some integer
$$N\geq\max_{1\leq l\leq n(p-1)\atop 0<r<m\cdot N(p,n,l)}C^r_{m\cdot N(p,n,l)}\log_p((d+l)r).$$
where $N(p,n,l)=\sum^{l(p)}_{q=0}(-1)^q\cdot C^q_n\cdot C^{l-pq}_{n+l-q-1}$, and $l(p)$ is the unique integer such that $0\leq l-l(p)\cdot p< p$. Then the Frobenius direct image ${F_X}_*(\rho_*(\Omg^1_X))$ is a semistable sheaf, where $\rho_*(\Omg^1_X)$ is the locally free sheaf obtained from $\Omg^1_X$ via the representation $\rho$.
\end{Theorem}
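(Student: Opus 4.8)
The plan is to deduce this from Proposition \ref{Prop:StbTenTru} together with X.~Sun's criterion Lemma \ref{Lem:SunStb}, applied to the locally free sheaf $\E=\Omg^1_X$, which has rank $n=\dim X$ because $X$ is smooth. First I would fix an integer $l$ with $1\leq l\leq n(p-1)$ and observe that the hypothesis on $N$, being a maximum over the joint index set $(l,r)$, in particular yields
$$N\geq\max_{0<r<m\cdot N(p,n,l)}C^r_{m\cdot N(p,n,l)}\log_p((d+l)r)$$
for that fixed $l$. Applying Proposition \ref{Prop:StbTenTru} to $\E=\Omg^1_X$ and this $\rho$, the torsion free part of $\rho_*(\Omg^1_X)\otimes\T^l(\Omg^1_X)$ is then semistable. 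Since $\Omg^1_X$ and $\T^l(\Omg^1_X)$ are locally free, so are $\rho_*(\Omg^1_X)$ and $\rho_*(\Omg^1_X)\otimes\T^l(\Omg^1_X)$, so that torsion free part is the whole sheaf; hence $\rho_*(\Omg^1_X)\otimes\T^l(\Omg^1_X)$ is semistable for every $l$ with $1\leq l\leq n(p-1)$.

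Next I would apply Lemma \ref{Lem:SunStb} to the torsion free sheaf $\rho_*(\Omg^1_X)$ in place of $\E$: its hypotheses hold, since $\deg_H(\Omg^1_X)\geq 0$ by assumption and, by the previous step, $\rho_*(\Omg^1_X)\otimes\T^l(\Omg^1_X)$ is semistable for all $1\leq l\leq n(p-1)$. This gives that ${F_X}_*(\rho_*(\Omg^1_X))$ is semistable, which is the first assertion. For the last claim I would fix $\Ls\in\Pic(X)$ and use that tensoring a torsion free sheaf by a line bundle shifts all slopes by $\deg_H(\Ls)$, hence preserves semistability; thus $(\rho_*(\Omg^1_X)\otimes\Ls)\otimes\T^l(\Omg^1_X)\cong(\rho_*(\Omg^1_X)\otimes\T^l(\Omg^1_X))\otimes\Ls$ is semistable for each such $l$, and Lemma \ref{Lem:SunStb} applied to $\rho_*(\Omg^1_X)\otimes\Ls$ yields that ${F_X}_*(\rho_*(\Omg^1_X)\otimes\Ls)$ is semistable.

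Essentially all the content is already packaged in the two results cited, so I do not expect a genuine geometric obstacle; the one point that needs care is the numerical bookkeeping in the first step, namely that the single number $N$ in the hypothesis must simultaneously dominate the bound required by Proposition \ref{Prop:StbTenTru} for every $l\in\{1,\dots,n(p-1)\}$ — which is precisely why the maximum in the statement runs over the pair $(l,r)$. As a consistency remark, the same $N$ also dominates $\max_{0<r<m}C^r_m\log_p(dr)$, so Theorem \ref{Thm:StbExtGroup} even shows $\rho_*(\Omg^1_X)$ is itself semistable, although Lemma \ref{Lem:SunStb} as stated does not require this.
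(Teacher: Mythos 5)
Your proof is correct and follows the same route as the paper: for each $l$ with $1\leq l\leq n(p-1)$ the hypothesis on $N$ dominates the bound in Proposition \ref{Prop:StbTenTru}, which gives semistability of $\rho_*(\Omg^1_X)\otimes\T^l(\Omg^1_X)$, and then Lemma \ref{Lem:SunStb} applied to $\rho_*(\Omg^1_X)$ yields the conclusion. Your extra paragraph on twisting by $\Ls$ (and the consistency remark) goes beyond the statement as posed but is harmless and correct.
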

\begin{proof}
By Proposition \ref{Prop:StbTenTru}, we have $\T^l(\Omg^1_X)\otimes\rho_*(\Omg^1_X)$ are semistable sheaves for all integers $1\leq l\leq n(p-1)$. Then the Frobenius direct image ${F_X}_*(\rho_*(\Omg^1_X))$ is a semistable sheaf by Lemma \ref{Lem:SunStb}.
\end{proof}

\begin{Corollary}
Let $k$ be an algebraically closed field of characteristic $p>0$, $X$ a smooth projective variety over $k$ of dimension $n$ with a fixed ample divisor $H$ such that $\deg_H(\Omg^1_X)\geq 0$, $d$ and $m\in\N_+$. Suppose that $F^{N*}_X(\Omg^1_X)$ is semistable for some integer $$N\geq\max_{1\leq l\leq n(p-1)\atop 0<r<m\cdot N(p,n,l)}C^r_{m\cdot N(p,n,l)}\log_p((d+l)r).$$
where $N(p,n,l)=\sum^{l(p)}_{q=0}(-1)^q\cdot C^q_n\cdot C^{l-pq}_{n+l-q-1}$, and $l(p)$ is the unique integer such that $0\leq l-l(p)\cdot p< p$. Then
\begin{itemize}
    \item[$(1)$] If $m=n^d$, then ${F_X}_*((\Omg^1_X)^{\otimes d})$ is a semistable sheaf.
    \item[$(2)$] If $m=C^{r-1}_{n+r-1}$, then ${F_X}_*(\Sym^d(\Omg^1_X))$ is a semistable sheaf.
    \item[$(3)$] If $m=C^d_n$, then ${F_X}_*(\bigwedge^d(\Omg^1_X))$ is a semistable sheaf.
\end{itemize}
\end{Corollary}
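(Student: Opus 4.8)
The plan is to obtain all three assertions as special cases of Theorem \ref{Thm:FroRep}, each corresponding to one particular choice of the representation $\rho$. Fix an $n$-dimensional $k$-vector space $V$ with a basis $e_1,\dots,e_n$ and identify $GL_n(k)\cong GL_k(V)$. I would invoke Theorem \ref{Thm:FroRep} with $\rho$ taken to be, respectively: the $d$-th tensor power representation $\tau_d\colon g\mapsto g^{\otimes d}$ on $V^{\otimes d}$ for $(1)$; the $d$-th symmetric power representation $\sigma_d\colon g\mapsto\Sym^d(g)$ on $\Sym^d(V)$ for $(2)$; and the $d$-th exterior power representation $\wedge_d\colon g\mapsto\bigwedge^d g$ on $\bigwedge^d(V)$ for $(3)$. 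By the associated-bundle construction defining $\rho_*(-)$ (as recalled before Theorem \ref{Thm:T(V)}, applied to $\Omg^1_X$ in place of a general $\E$) one has $\tau_{d*}(\Omg^1_X)=(\Omg^1_X)^{\otimes d}$, $\sigma_{d*}(\Omg^1_X)=\Sym^d(\Omg^1_X)$ and $\wedge_{d*}(\Omg^1_X)=\bigwedge^d(\Omg^1_X)$, so the output of Theorem \ref{Thm:FroRep} in each case is precisely the asserted semistability of the corresponding Frobenius direct image.

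It then remains only to verify that each of these three representations meets the standing hypotheses of the paper and carries the degree and dimension data used in the statement. Expanding $\rho(g)$ in the basis of the target given by the tensor (resp.\ symmetric, resp.\ alternating) monomials in $e_1,\dots,e_n$, every matrix entry is a homogeneous polynomial of degree $d$ in the coordinates $g_{ij}$ of $g$, with no power of $\det(g)^{-1}$ appearing; hence each $\rho$ is a polynomial representation over $k$ at most degree $d$ in the sense of Section $3$. The dimensions of the targets are $\dim_k V^{\otimes d}=n^d$, $\dim_k\Sym^d(V)=C^{d}_{n+d-1}$ and $\dim_k\bigwedge^d(V)=C^d_n$, which give the three values of $m$ in the statement (the exponent in $(2)$ should read $d$). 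Lastly, the radical $R(GL_n(k))$ is the group of scalar matrices $\{\lambda I_n:\lambda\in\Gm\}$, and $\tau_d(\lambda I_n)=\sigma_d(\lambda I_n)=\wedge_d(\lambda I_n)=\lambda^d I_m$ is again scalar, so each $\rho$ sends $R(GL_n(k))$ into $R(GL_m(k))$, as the paper requires of all its representations.

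Once these points are settled, the bound on $N$ in the Corollary is verbatim the hypothesis of Theorem \ref{Thm:FroRep} for the chosen $\rho$, since the quantity $N(p,n,l)$ depends only on $p,n,l$ and not on $\rho$; so all three conclusions follow at once. I do not anticipate a genuine obstacle here: the only things needing a moment's care are checking that the matrix coefficients of $\tau_d$, $\sigma_d$ and $\wedge_d$ are honest polynomials of degree exactly $d$ (so that the degree bound is $d$, not larger) and the elementary count $\dim_k\Sym^d(V)=C^{d}_{n+d-1}$, both of which are standard multilinear algebra.
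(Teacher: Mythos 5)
Your proposal is correct and follows essentially the same route as the paper: both apply Theorem \ref{Thm:FroRep} to the tensor, symmetric and exterior power representations, observing that each is a polynomial representation of degree $d$ whose target has the stated dimension (and you rightly flag that the exponent in $(2)$ should be $d$, i.e.\ $m=C^{d}_{n+d-1}$). Your additional checks (that the matrix entries are honest degree-$d$ polynomials and that scalars map to scalars, so the radical condition holds) are details the paper leaves implicit but are entirely in the same spirit.
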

\begin{proof}
It is obvious that for any integer $d>0$, the natural $GL_k(V)$-action on $V^{\otimes d}$, $\Sym^d(V)$ and $\bigwedge^d(V)$ are all polynomial representation of degree $d$. Since $\rk((\Omg^1_X)^{\otimes d})=n^d$, $\rk(\Sym^d(\Omg^1_X))=C^{r-1}_{n+r-1}$ and $\rk(\bigwedge^d(\Omg^1_X))=C^d_n$, this corollary follows from Theorem \ref{Thm:FroRep}.
\end{proof}

\noindent\textbf{Acknowledgments:}
I would like to express my hearty thanks to Professor Xiaotao Sun, who introduced me to this subject. This work was supported by National Natural Science Foundation of China (Grant No. 11501418), Shanghai Sailing Program(15YF1412500).

\end{document}